\newtheorem{theorem}{Theorem}
\newtheorem{lemma}{Lemma}
\newtheorem{example}{Example}
\newtheorem{corollary}{Corollary}
\newtheorem{remark}{Remark}
\theoremstyle{definition}
\newtheorem{definition}{Definition}
\title{\bf Some qualitative properties of solutions for nonlinear fractional differential equation involving
two $\Phi $--Caputo fractional derivatives}
\author{Choukri Derbazi $^1$,  Qasem M. Al-Mdallal$^{2}$, Fahd Jarad$^{3,4}$ Zidane Baitiche$^1$, \\
}
\date{}
\begin{document}
\maketitle
\textcolor[rgb]{1.00,1.00,1.00}{``}
$~~~~~~~~~$ $^1$Laboratoire Equations Diff\'erentielles, Department of Mathematics,\\
$~~~~~~~~~~~~~~$ Faculty of Exact Sciences, University Fr\`eres Mentouri Constantine 25000, Algeria\\
$~~~~~~~~~~~$ $^2$Department of Mathematical Sciences, United Arab Emirates University, P. O. Box 17551,\\
$~~~~~~~~~~~~~~~~~~~~~~~~~~~~~~~~~~~~~~~~~~~~~$ Al-Ain, United Arab Emirates\\
$~~~~~~~~~~~$ $^3$ Department of Mathematics, \c{C}ankaya University 06790 Ankara, Turkey\\
$~~~~~~~~~~~$ $^4$ Department of Medical Research, China Medical University Hospital, China Medical\\
$~~~~~~~~~~~~~~~~~~~~~~~~~~~~~~~~~~~~~~~~~~~~~$ University, Taichung, Taiwan.\\

\textcolor[rgb]{1.00,1.00,1.00}{"}

\begin{abstract}
The momentous objective of this work is to discuss some qualitative properties of solutions such as the estimate on the solutions, the continuous dependence of the solutions on initial conditions as well as the existence and uniqueness of extremal solutions for a new class of fractional differential equations involving two fractional derivatives in the sense of Caputo fractional derivative with respect to a new function $\Phi$. Firstly, by using the generalized Laplace transform method, we give an explicit formula of the solutions for the aforementioned linear problem which can be regarded as a novelty item. Secondly, by the implementation of the  $\Phi$--fractional Gronwall  inequality we analyze some properties such as estimates and continuous dependence of the solutions on initial conditions. Thirdly, with the help of features of the Mittag-Leffler functions (M-LFs) we build a new comparison principle for the corresponding linear equation this outcome plays a vital role in the forthcoming analysis of this paper especially when we combine it with the monotone iterative technique alongside facet with the method of upper and lower solutions to get the extremal solutions for the analyzed problem. Lastly, we offer some examples to confirm the validity of our main results.

\par
\noindent {\bf Keywords: $\Phi $--Caputo fractional derivative, multi-terms, generalized Laplace transforms,  continuous dependence,  extremal solutions, monotone iterative technique, upper (lower) solutions.} \\
\noindent {\bf 2010 MSC:} 34A08; 26A33; 34A12. \\
\noindent {\bf Email address:} choukri.derbazi@umc.edu.dz, \ baitichezidane19@gmail.com,\ q.almdallal@uaeu.ac.ae, \\ fahd@cankaya.edu.tr.
\end{abstract}
\section {Introduction}
Over the previous years,  the field of fractional calculus becomes a powerful tool to support mathematical modeling with several successful results. Moreover, fractional differential equations are used to model many physical, biological, and engineering problems (see \cite{Hilfer,MainradiF,Pud,Sabatier}). Due to the development of the theories of fractional calculus, a variety of definitions have appeared in the literature. Some famous definitions are those given by Riemann and Liouville, Caputo, Hadamard, and so on see for instance the textbook of Kilbas~\cite{b0}. Another kind of fractional operator that appears in the literature is the fractional derivative of a function by another function. Details and properties of this novel class of fractional operators can be found in~\cite{Almeida1,Almeida2,SousaPsi}. On the other hand,  most of the time it is a hard task to search and compute the exact solution of nonlinear FDEs. One possible way to achieve this purpose is to apply the monotone iterative technique alongside facet with the method of upper and lower solutions.  In addition, another interesting and fascinating feature of this method not only guarantees the existence of extreme solutions, but it is also an effective method for constructing two explicit monotone iterative sequences that converge to the extremal solutions in a region generated by the upper and lower solutions. The readers can find more details about the utility of this technique as well as its significance in tackling nonlinear FDEs in a series of papers~\cite{Al-Refai,Bohner,Derbazi,Kucche2020,Wang,Zhang}.
 However, to the best of the authors' observation, the aforesaid method is very rarely used for nonlinear FDEs involving two $\Phi$--Caputo fractional derivatives.

Motivated greatly by the above mentioned reasons, 
 in this manuscript we investigate    some qualitative properties of solutions such as the estimate on the solutions, the continuous dependence of the solutions on initial conditions as well as the existence and uniqueness of extremal solutions  for  the following problem:
\begin{equation}\label{aa}
\left\{\begin{array}{ll}
{{}^c\mathbb{D}}_{a^+}^{\mu;\Phi} \mathfrak{z} (\ell) + \omega  {{}^c\mathbb{D}}_{a^+}^{\kappa;\Phi} \mathfrak{z} (\ell )=\mathbb{F}(\ell ,\mathfrak{z}(\ell )),& \ell \in \Delta:=[a,b],\\[3mm]
\mathfrak{z}(a)= \mathfrak{z}_a, &
\end{array}\right.
\end{equation}
where ${{}^c\mathbb{D}}_{a^+}^{\mu ;\Phi }$ and ${{}^c\mathbb{D}}_{a^+}^{\kappa ;\Phi}$ denote the $\Phi$-Caputo fractional derivatives, with the orders $\mu$ and $\kappa$ respectively such that $0<\kappa < \mu\leq 1$, $\omega >0$, $z_a \in \mathbb{R}$ and $\mathbb{F} \in C(\Delta \times\mathbb{R}, \mathbb{R})$. Our findings are a generalization and a partial continuation of some results obtained in~\cite{Derbazi,Peng,Peng2,Tate}.

An outline of the present work is as follows. Sec.~\ref{sec2}, is devoted to some preliminary results that are useful in the sequel.  In Sec.~\ref{sec3}, we discuss  some qualitative properties of solutions such as the estimate on the solutions, the continuous dependence of the solutions on initial conditions as well as the uniqueness of solutions for the problem~\eqref{aa}. While Sec.~\ref{sec4},  is
devoted to studying the existence and uniqueness of extremal solutions for problem~\eqref{aa}. To prove this, we use the monotone iterative technique together with the technique of upper and lower solutions. At last, in order to fully explain our theoretical findings, we provide two examples in Sec.~\ref{sec5}.
\section{Preliminaries}
\label{sec2}

In the current section, we state some  basic concepts of fractional calculus, related to our work.

Let $\Delta=[a,b]$,  $0\leq a< b < \infty$, be a finite interval and $\Phi \colon \Delta \to \mathbb{R}$ be an increasing differentiable
function such that $\Phi'(\ell )\neq 0$, for all $\ell \in \Delta$.
\begin{definition}[\protect\cite{Almeida1,b0}]
\label{r}
The RL fractional integral of order $\mu>0$ for an integrable function $\mathfrak{z} \colon \Delta
\to \mathbb{R}$ with respect to $\Phi$ is described by
\begin{equation*}\label{RLFP}
\mathbb{I}_{a^+}^{\mu ;\Phi} \mathfrak{z} (\ell )= \int_{a}^{\ell } \frac{\Phi' (\rho)(\Phi (\ell )-\Phi (\rho))^{ \mu -1}}{\Gamma (\mu)}
\mathfrak{z} (\rho)
\mathrm{d}\rho,
\end{equation*}
\end{definition}
where $\Gamma (\mu)=\int_{0}^{+\infty } \ell^{\mu -1} e^{-\ell }\mathrm{d}\ell$, $\mu >0$ is called the Gamma function.
\begin{definition}[\protect\cite{Almeida1}]
Let $\Phi, \mathfrak{z} \in C^{n} ( \Delta,\mathbb{R})$. The Caputo fractional derivative of $\mathfrak{z}$ of order $ n-1<\mu <n$ with respect to $\Phi$
is defined by
\begin{equation*}
{{}^c\mathbb{D}}_{a^+}^{\mu ;\Phi} \mathfrak{z} (\ell )= \mathbb{I}_{a^+}^{ n - \mu ;\Phi } \mathfrak{z}_{\Phi}^{[n]}(\ell),
\end{equation*}
where $n=[\mu]+1$ for $\mu \notin  \mathbb{N}$, $n=\mu $ for $\mu \in
\mathbb{N}$, and $$\mathfrak{z}_{\Phi }^{[n]}( \ell ) = \left( \frac{\frac{{\mathrm d}}{ {\mathrm d} \ell }}{ \Phi'(\ell )}\right)^{n}\mathfrak{z} (\ell ).$$
From the definition, it is clear that
\begin{equation*}
{{}^c\mathbb{D}}_{a^+}^{\mu ;\Phi } \mathfrak{z} (\ell ) = \left\{
\begin{array}{lc}
\displaystyle
\int_a^{\ell } \frac{ \Phi'(\rho)( \Phi (\ell ) - \Phi (\rho))^{ n-\mu -1}}{\Gamma (n-\mu )}  \mathfrak{z}_{\Phi}^{[n]} (\rho)\mathrm{d}\rho, & \mu
\notin \mathbb{N},\\[5mm]
\displaystyle
\mathfrak{z} _{\Phi }^{[n]}(\ell), & \mu \in \mathbb{N}.
\end{array}\right.
\end{equation*}
\end{definition}
Some basic properties of the $\Phi$-fractional operators  are listed in the following Lemma.
\begin{lemma}[\protect\cite{Almeida1}]
\label{LMA2}
Let $\mu$, $\kappa >0$ and $\mathfrak{z} \in C(\Delta,\mathbb{R})$. Then for each $\ell \in \Delta$,
\begin{enumerate}
\item ${{}^c\mathbb{D}}_{a^+}^{\mu ; \Phi }\mathbb{I}_{a^+}^{\mu ;\Phi
} \mathfrak{z} (\ell )=\mathfrak{z} (\ell )$,
\item $\mathbb{I}_{a^+}^{\mu ;\Phi }{{}^c\mathbb{D}}_{a^+}^{\mu ;\Phi
} \mathfrak{z} (\ell )=\mathfrak{z} (\ell )-\mathfrak{z} (a)$, for $0<\mu \leq 1$,
\item $\mathbb{I}_{a^+}^{\mu ;\Phi } \left( \Phi (\ell)-\Phi (a) \right)^{\kappa -1}=\frac{\Gamma (\kappa )}{\Gamma (\kappa +\mu )} \left( \Phi (\ell)-\Phi (a) \right)^{\kappa+\mu -1}$,
\item ${{}^c\mathbb{D}}_{a^+}^{\mu ;\Phi } \left( \Phi (\ell)-\Phi (a) \right)^{\kappa -1}= \frac{\Gamma (\kappa )}{\Gamma (\kappa -\mu )} \left( \Phi (\ell)- \Phi (a) \right)^{\kappa-\mu -1}$,
\item ${{}^c\mathbb{D}}_{a^+}^{\mu ;\Phi } \left( \Phi (\ell)-\Phi (a) \right)^{k }=0$, for all $k\in \{0, \dots, n-1\}, n\in \mathbb{N}$.
\end{enumerate}
\end{lemma}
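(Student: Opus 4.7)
My approach is to reduce every assertion to the classical Riemann--Liouville/Caputo calculus via the change of variables $v=\Phi(\rho)$. Setting $F(v)=\mathfrak{z}(\Phi^{-1}(v))$, a direct substitution (together with $\mathrm{d}v=\Phi'(\rho)\,\mathrm{d}\rho$) shows that
\begin{equation*}
\mathbb{I}_{a^+}^{\mu;\Phi}\mathfrak{z}(\ell)\;=\;\bigl(I_{\Phi(a)^+}^{\mu}F\bigr)\!\bigl(\Phi(\ell)\bigr),
\end{equation*}
where $I_{c^+}^{\mu}$ denotes the classical Riemann--Liouville integral with base point $c$. A repeated application of the chain rule yields $\mathfrak{z}_\Phi^{[n]}(\ell)=F^{(n)}(\Phi(\ell))$, hence also ${}^c\mathbb{D}_{a^+}^{\mu;\Phi}\mathfrak{z}(\ell)=\bigl({}^cD_{\Phi(a)^+}^{\mu}F\bigr)(\Phi(\ell))$. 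With this dictionary in hand, the five items translate into standard identities.

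For item~(3) I would substitute $u=(\Phi(\rho)-\Phi(a))/(\Phi(\ell)-\Phi(a))$ in the defining integral; the factor $\Phi'(\rho)\,\mathrm{d}\rho$ is absorbed into $(\Phi(\ell)-\Phi(a))\,\mathrm{d}u$, and the remaining integral is the Beta integral $B(\kappa,\mu)$, yielding the claim through $B(\kappa,\mu)=\Gamma(\kappa)\Gamma(\mu)/\Gamma(\kappa+\mu)$. For item~(5), a single application of $\frac{d/d\ell}{\Phi'(\ell)}$ to $(\Phi(\ell)-\Phi(a))^{k}$ gives $k(\Phi(\ell)-\Phi(a))^{k-1}$, so an easy induction shows that $n$ iterations annihilate the expression whenever $k<n$; applying $\mathbb{I}_{a^+}^{n-\mu;\Phi}$ to the zero function then yields the result. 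Item~(4) combines the previous two: first verify by induction that
\begin{equation*}
\mathfrak{z}_\Phi^{[n]}\bigl((\Phi(\ell)-\Phi(a))^{\kappa-1}\bigr)=\frac{\Gamma(\kappa)}{\Gamma(\kappa-n)}(\Phi(\ell)-\Phi(a))^{\kappa-n-1},
\end{equation*}
then apply item~(3) with parameters $n-\mu$ and $\kappa-n$, and simplify.

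Items~(1) and~(2) are the substantive parts. For item~(1), the dictionary above identifies the left-hand side with $\bigl({}^cD_{\Phi(a)^+}^{\mu}\,I_{\Phi(a)^+}^{\mu}F\bigr)(\Phi(\ell))$, and the classical identity ${}^cD_{c^+}^{\mu}\,I_{c^+}^{\mu}F=F$ immediately yields $F(\Phi(\ell))=\mathfrak{z}(\ell)$. Item~(2) is handled analogously: the classical fact $I_{c^+}^{\mu}\,{}^cD_{c^+}^{\mu}F(v)=F(v)-F(c)$, valid for $0<\mu\leq 1$, pulls back to $\mathfrak{z}(\ell)-\mathfrak{z}(a)$ since $F(\Phi(a))=\mathfrak{z}(a)$.

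The main obstacle will be verifying the regularity hypotheses carefully enough to justify both the change of variables and the underlying classical composition identities --- in particular, item~(2) implicitly requires $\mathfrak{z}$ to have an integrable $\Phi$-derivative so that $F$ is absolutely continuous on $[\Phi(a),\Phi(\ell)]$, and the Fubini-type interchange of integration used to prove the classical $I^{\mu}\,{}^cD^{\mu}$ identity must be transferred correctly to the transformed variable. Once the $\Phi$-to-classical correspondence is established rigorously, everything else is routine bookkeeping inherited from the standard theory.
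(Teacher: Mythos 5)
The paper offers no proof of this lemma: it is quoted verbatim from Almeida's work \cite{Almeida1} and used as a known toolbox result. Your reduction to the classical Riemann--Liouville/Caputo calculus via the conjugation $\mathbb{I}_{a^+}^{\mu;\Phi}\mathfrak{z}=\bigl(I_{\Phi(a)^+}^{\mu}(\mathfrak{z}\circ\Phi^{-1})\bigr)\circ\Phi$ and $\mathfrak{z}_{\Phi}^{[n]}=F^{(n)}\circ\Phi$ is correct and is essentially the standard route by which these identities are established in the cited source, so there is nothing to compare against within the paper itself. Two small caveats, neither of which undermines your plan: in item~(4) your intermediate step produces the power $(\Phi(\ell)-\Phi(a))^{\kappa-n-1}$, and invoking item~(3) on it requires $\kappa>n$ for the integrand to be integrable (for $\kappa\leq n$ one instead argues via the Riemann--Liouville form ${}^c\mathbb{D}^{\mu;\Phi}g=\mathbb{D}^{\mu;\Phi}\bigl(g-\sum_k g_{\Phi}^{[k]}(a)(\Phi(\cdot)-\Phi(a))^k/k!\bigr)$ --- this is really a gap in the lemma as quoted, which only assumes $\kappa>0$); and for item~(1) with merely continuous $\mathfrak{z}$ the function $\mathbb{I}^{\mu;\Phi}\mathfrak{z}$ need not be $C^1$, so the composition ${}^c\mathbb{D}^{\mu;\Phi}\mathbb{I}^{\mu;\Phi}$ should be justified through the identity ${}^c\mathbb{D}^{\mu;\Phi}g=\mathbb{D}^{\mu;\Phi}(g-g(a))$ together with $\mathbb{D}^{\mu;\Phi}\mathbb{I}^{\mu;\Phi}=\mathrm{id}$, exactly the regularity issue you flag at the end.
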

\begin{definition}[\protect\cite{Gorenflo}]
For $\mathrm p$, $\mathrm q >0$ and $\varpi\in \mathbb{R}$, the Mittag--Leffler functions (MLFs) of one and two parameters are given by
\begin{equation}\label{mgf}
\begin{aligned}
\mathbb{E}_{\mathrm p }( \varpi) & = \sum_{k=0}^{\infty } \frac{\varpi^{k} }{ \Gamma (\mathrm p k+1)},\\
\mathbb{E}_{\mathrm p ,\mathrm q}(\varpi) & = \sum_{k=0}^{\infty }\frac{\varpi^{k}}{\Gamma (\mathrm p k+\mathrm q) }.
\end{aligned}
\end{equation}
Clearly, $\mathbb{E}_{\mathrm p,1}( \varpi) = \mathbb{E}_{\mathrm  p}( \varpi)$.
\end{definition}
\begin{lemma}[\protect\cite{Gorenflo,Wei}] \label{PMitagg}
Let $\mathrm p\in (0, 1)$, $\mathrm{q} >\mathrm{p}$ be arbitrary and $\varpi \in \mathbb{R}$. The  functions $\mathbb{E}_{\mathrm p}, \mathbb{E}_{\mathrm p, \mathrm p}$ and $\mathbb{E}_{\mathrm p, \mathrm q}$ are nonnegative and have
the following properties:
\begin{enumerate}
\item $\mathbb{E}_{\mathrm p}(\varpi)\leq 1, \mathbb{E}_{\mathrm p, \mathrm q}(\varpi)\leq \frac{1}{
\Gamma (\mathrm q)}$, for any $\varpi<0$,
\item $\mathbb{E}_{\mathrm p, \mathrm q}(\varpi)=\varpi\mathbb{E}_{\mathrm p, \mathrm p+\mathrm q}(\varpi)+ \frac{1}{\Gamma (\mathrm q)}$, for  $\mathrm p, \mathrm q>0$, $\varpi \in \mathbb{R}$.
\end{enumerate}
\end{lemma}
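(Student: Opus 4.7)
The plan is to handle the two statements asymmetrically: property (2) is a purely algebraic identity on power series coefficients and I will establish it first; the bound in property (1) is then a quick corollary of (2) combined with the nonnegativity assertion. Nonnegativity on $(-\infty,0)$ is the genuine hurdle.

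\textbf{Property (2).} The argument is a one-line index shift. Starting from the definition
\begin{equation*}
\mathbb{E}_{\mathrm p,\mathrm q}(\varpi)=\sum_{k=0}^{\infty}\frac{\varpi^{k}}{\Gamma(\mathrm p k+\mathrm q)},
\end{equation*}
I peel off the $k=0$ term, which equals $1/\Gamma(\mathrm q)$, factor a single $\varpi$ out of the remainder, and relabel $j=k-1$; the residual sum becomes $\sum_{j\geq 0}\varpi^{j}/\Gamma(\mathrm p j+\mathrm p+\mathrm q)=\mathbb{E}_{\mathrm p,\mathrm p+\mathrm q}(\varpi)$, giving the stated identity. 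The manipulation is valid for every $\varpi\in\mathbb{R}$ since both series are entire.

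\textbf{Nonnegativity.} For $\varpi\geq 0$ this is trivial: every term of the defining series is nonnegative. For $\varpi<0$ the key fact is Pollard/Schneider-style \emph{complete monotonicity} of $x\mapsto\mathbb{E}_{\mathrm p,\mathrm q}(-x)$ on $[0,\infty)$ whenever $0<\mathrm p\leq 1$ and $\mathrm q\geq \mathrm p$. The cleanest route is via the Hankel-contour / Laplace-transform representation of the two-parameter MLF, which writes
\begin{equation*}
\mathbb{E}_{\mathrm p,\mathrm q}(-x)=\int_{0}^{\infty}e^{-s x}\,\mathrm{d}\mu_{\mathrm p,\mathrm q}(s),\qquad x\geq 0,
\end{equation*}
for a nonnegative spectral measure $\mu_{\mathrm p,\mathrm q}$; the condition $\mathrm q\geq\mathrm p$ is precisely what guarantees positivity of the density. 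Nonnegativity follows at once. This is the delicate step and the main obstacle; I would either quote it from the references \cite{Gorenflo,Wei} cited in the statement or reprove it via the Mellin--Barnes integral.

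\textbf{Property (1).} With nonnegativity secured, both bounds drop out of (2). Take $\varpi<0$. The hypotheses give $\mathrm p+\mathrm q>\mathrm p$ with $\mathrm p\in(0,1)$, so the previous paragraph yields $\mathbb{E}_{\mathrm p,\mathrm p+\mathrm q}(\varpi)\geq 0$; multiplying by the negative scalar $\varpi$ gives $\varpi\mathbb{E}_{\mathrm p,\mathrm p+\mathrm q}(\varpi)\leq 0$, and substituting into (2) produces $\mathbb{E}_{\mathrm p,\mathrm q}(\varpi)\leq 1/\Gamma(\mathrm q)$. Specialising to $\mathrm q=1$ and recalling $\Gamma(1)=1$ yields the companion bound $\mathbb{E}_{\mathrm p}(\varpi)\leq 1$, completing the proof.
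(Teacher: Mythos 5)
The paper offers no proof of this lemma at all: it is imported wholesale from the cited references \cite{Gorenflo,Wei}, so there is no in-paper argument to compare yours against. Taken on its own terms, your proposal is sound. The index-shift derivation of property (2) is exactly the standard computation and is valid for all real (indeed complex) $\varpi$ since the series are entire. Your deduction of property (1) from (2) is also correct: for $\varpi<0$ the term $\varpi\,\mathbb{E}_{\mathrm p,\mathrm p+\mathrm q}(\varpi)$ is nonpositive once $\mathbb{E}_{\mathrm p,\mathrm p+\mathrm q}(\varpi)\geq 0$, and the second parameter $\mathrm p+\mathrm q$ automatically satisfies the threshold $\mathrm p+\mathrm q\geq\mathrm p$, so the specialisations to $\mathrm q$ arbitrary and to $\mathrm q=1$ both go through. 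You have also correctly isolated where the real mathematics lives: the nonnegativity of $\mathbb{E}_{\mathrm p,\mathrm q}$ on the negative half-line, which rests on the Pollard--Schneider complete monotonicity theorem (valid precisely for $0<\mathrm p\leq 1$, $\mathrm q\geq\mathrm p$, which also covers the $\mathbb{E}_{\mathrm p,\mathrm p}$ case asserted in the statement). You defer that ingredient to the literature rather than proving it, but that is no weaker than what the paper itself does, since the paper defers the entire lemma. If you wanted a self-contained account you would need to supply the Hankel-contour or Laplace-representation argument you allude to; as a verification that the lemma is correctly stated and internally consistent, your write-up does the job.
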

\begin{definition}[\cite{JaradLaplace}] A function $u : [a, \infty) \to\mathbb{R}$ is said to be of $\Phi(\ell)$-exponential order if there exist non-negative constants $M, c, b$ such that
\[|u(\ell)| \leq M e^{ c(\Phi(\ell) - \Phi(a))},\]
for $ \ell \geq b$.
\end{definition}
\begin{definition}[\cite{JaradLaplace}]
Let $\mathfrak{z}, \Phi:[a, \infty) \to\mathbb{R}$ be real valued functions such that $\Phi(\ell)$ is continuous and $\Phi'(\ell) > 0$ on $[a, \infty)$. The generalized Laplace transform of $\mathfrak{z}$ is denoted by
\begin{equation}\label{GLT}
\mathbb{L}_{\Phi} \bigl\lbrace {\mathfrak{z}(\ell)} \bigr\rbrace= \int_{a}^{ \infty} e^{-\lambda (\Phi(\ell)-\Phi(a) ) } \mathfrak{z}(\ell)\Phi '(\ell) \,{\mathrm d}\ell,
\end{equation}
for all $\lambda>0$, provided that the integral in~\eqref{GLT} exists.
\end{definition}
\begin{definition}[\cite{JaradLaplace}]
Let $u$ and $v$ be two functions which are piecewise continuous at each interval $[a, b]$ and of exponential order. We define the generalized convolution of $u$ and $v$ by
\[( u \ast_{\Phi}v ) ( \ell ) = \int_{a}^{\ell}\Phi'(\rho) u( \rho) v \bigl( \Phi^{-1} \bigl( \Phi(\ell)+\Phi(a)- \Phi(\rho) \bigr) \bigr) \,d\rho.\]
\end{definition}
\begin{lemma}[\cite{JaradLaplace}]
Let $u$ and $v$ be two functions which are piecewise continuous at each interval $[a, b]$ and of exponential order. Then
\[\mathbb{L}_{\Phi} \bigl\lbrace {u \ast_{\Phi}v} \bigr\rbrace
=\mathbb{L}_{\Phi} \bigl\lbrace {u } \bigr\rbrace\mathbb{L}_{\Phi} \bigl\lbrace {v} \bigr\rbrace.\]
\end{lemma}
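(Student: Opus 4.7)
The plan is to follow the blueprint of the classical convolution theorem for the Laplace transform, adapting the change of variables to the $\Phi$--weighted setting. First I would substitute the definitions of $\mathbb{L}_{\Phi}$ and of the generalized convolution into the left--hand side, producing the iterated integral
\[\mathbb{L}_{\Phi}\bigl\{u\ast_{\Phi}v\bigr\}=\int_{a}^{\infty}\!\!\int_{a}^{\ell} e^{-\lambda(\Phi(\ell)-\Phi(a))}\Phi'(\rho)\Phi'(\ell)u(\rho)v\bigl(\Phi^{-1}(\Phi(\ell)+\Phi(a)-\Phi(\rho))\bigr)\,d\rho\,d\ell,\]
and then invoke Fubini's theorem to swap the order of integration so that $\rho$ becomes the outer variable and $\ell$ runs from $\rho$ to $\infty$ on the inside. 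The hypothesis that $u$ and $v$ are piecewise continuous of $\Phi$--exponential order guarantees absolute integrability on the region $\{a\le\rho\le\ell<\infty\}$ for all sufficiently large $\lambda$, so the interchange is legitimate.

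The key step is the substitution $\sigma=\Phi^{-1}\bigl(\Phi(\ell)+\Phi(a)-\Phi(\rho)\bigr)$ in the inner integral. Since $\Phi$ is strictly increasing with $\Phi'>0$, this map is a smooth bijection sending $\ell=\rho$ to $\sigma=a$ and $\ell\to\infty$ to $\sigma\to\infty$, and its differential is $\Phi'(\sigma)\,d\sigma=\Phi'(\ell)\,d\ell$. A direct computation from the defining relation $\Phi(\sigma)=\Phi(\ell)+\Phi(a)-\Phi(\rho)$ yields
\[\Phi(\ell)-\Phi(a)=\bigl(\Phi(\sigma)-\Phi(a)\bigr)+\bigl(\Phi(\rho)-\Phi(a)\bigr),\]
so that the exponential weight factors cleanly as $e^{-\lambda(\Phi(\ell)-\Phi(a))}=e^{-\lambda(\Phi(\rho)-\Phi(a))}e^{-\lambda(\Phi(\sigma)-\Phi(a))}$. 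This factorization is exactly what the specific shifted form of $\ast_{\Phi}$ is designed to produce, and it is the heart of the argument.

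After the substitution the inner integral collapses to $e^{-\lambda(\Phi(\rho)-\Phi(a))}\int_{a}^{\infty}e^{-\lambda(\Phi(\sigma)-\Phi(a))}v(\sigma)\Phi'(\sigma)\,d\sigma=e^{-\lambda(\Phi(\rho)-\Phi(a))}\mathbb{L}_{\Phi}\{v\}$, and pulling this factor out of the outer integral in $\rho$ leaves precisely $\mathbb{L}_{\Phi}\{u\}\mathbb{L}_{\Phi}\{v\}$, as claimed. The only points requiring genuine care are the Fubini justification and the bookkeeping around the change of variables; once the exponential factorization identity above is in hand, the result is essentially forced.
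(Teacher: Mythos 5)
Your proof is correct. The paper itself gives no proof of this lemma --- it is quoted verbatim from the cited reference of Jarad and Abdeljawad --- and your argument is the standard one used there: write out the iterated integral, apply Fubini (justified by the $\Phi$-exponential order hypothesis for $\lambda$ large), and perform the substitution $\Phi(\sigma)=\Phi(\ell)+\Phi(a)-\Phi(\rho)$, whose additivity identity $\Phi(\ell)-\Phi(a)=(\Phi(\sigma)-\Phi(a))+(\Phi(\rho)-\Phi(a))$ splits the exponential. The only implicit assumption worth flagging is that $\Phi(\ell)\to\infty$ as $\ell\to\infty$, which is needed both for your substitution limits and for the generalized Laplace transform to behave as expected; this is a standing hypothesis of the framework rather than a gap in your argument.
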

In the following Lemma, we present the generalized Laplace transforms of some elementary functions as well as the generalized Laplace transforms of
the generalized fractional integrals and derivatives.
\begin{lemma}[\cite{JaradLaplace}]
\label{LaplacePsi} The
following properties are satisfied:
\begin{enumerate}
\item  $\mathbb{L}_{\Phi} \bigl\lbrace {1} \bigr\rbrace = \frac{1}{\lambda}$ where $\lambda>0$,
\item $\mathbb{L}_{\Phi} \bigl\lbrace {( \Phi (\ell)-\Phi (a))^{\mathrm r-1}} \bigr\rbrace = \frac{\Gamma (\mathrm{r})}{ \lambda^{\mathrm r}}$, where  $\mathrm{r}$ where $\lambda>0$,
\item $\mathbb{L}_{\Phi} \bigl\lbrace {\mathbb{E}_{\mathrm {p}} \bigl(\pm\omega(\Phi(\ell)-\Phi(a))^{\mathrm{ p}}\bigr)} \bigr\rbrace = \frac{\lambda^{ \mathrm p-1}}{\lambda^{\mathrm p}\mp\omega}$, for $\mathrm p>0$ and $\left\vert \frac{\omega}{ \lambda^{ \mathrm{p}}}\right\vert<1$,
\item $\mathbb{L}_{\Phi} \bigl\lbrace {(\Phi (\ell)-\Phi (a))^{\mathrm q -1}\mathbb E_{\mathrm p,\mathrm q} \bigl( \pm\omega( \Phi(\ell) - \Phi(a))^{ \mathrm {p}} \bigr) } \bigr\rbrace = \frac{ \lambda^{\mathrm p - \mathrm q}}{ \lambda^{\mathrm p}\mp \omega}$, where $\mathrm p>0$ and $\left\vert\frac{\omega}{\lambda^{\mathrm{p}}}\right\vert<1$,
\item $\mathbb{L}_{\Phi} \bigl\lbrace {\mathbb{I}_{a^+}^{\mu ;\Phi }\mathfrak{z} (\ell )} \bigr\rbrace = \frac{\mathbb{L}_{\Phi} \left\lbrace {\mathfrak{z} (\ell )} \right\rbrace }{\lambda^{\mu}}$, for $\mu, \lambda>0$,
\item $\mathbb{L}_{\Phi} \bigl\lbrace {{{}^c\mathbb{D}}_{ a^+}^{\mu ;\Phi }\mathfrak{z} (\ell )} \bigr\rbrace = \lambda^{\mu} \mathbb{L}_{\Phi} \bigl\lbrace {\mathfrak{z} (\ell )} \bigr\rbrace - \lambda^{\mu-1}\mathfrak{z} (a)$, for $0<\mu\leq1$ and $\lambda>0$.
\end{enumerate}
\end{lemma}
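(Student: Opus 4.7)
The overall strategy is to reduce everything to classical Laplace-transform computations via the substitution $u = \Phi(\ell) - \Phi(a)$, which absorbs the factor $\Phi'(\ell)\,d\ell$ in \eqref{GLT} and converts $\mathbb{L}_\Phi\{\mathfrak{z}\}$ into the ordinary Laplace transform of $\mathfrak{z}\circ\Phi^{-1}(\cdot+\Phi(a))$. Items (1) and (2) then follow immediately from $\int_0^\infty e^{-\lambda u}\,du = 1/\lambda$ and from the integral definition of $\Gamma(\mathrm r)$ after the rescaling $v = \lambda u$. For (3) and (4), I would expand the Mittag--Leffler series termwise, justify the interchange of sum and integral by absolute convergence (using $|\omega/\lambda^{\mathrm p}|<1$), and apply (2) to each monomial $(\Phi(\ell)-\Phi(a))^{\mathrm p k + \mathrm q - 1}$. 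The resulting geometric series in $\omega/\lambda^{\mathrm p}$ collapses to the stated closed form, and (3) is the specialization $\mathrm q = 1$ of (4).

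For (5), the key observation is that $\mathbb{I}_{a^+}^{\mu;\Phi}\mathfrak{z}$ coincides with the generalized convolution $K_\mu \ast_\Phi \mathfrak{z}$, where $K_\mu(\ell) = (\Phi(\ell)-\Phi(a))^{\mu-1}/\Gamma(\mu)$: performing the change of variable $\sigma = \Phi^{-1}(\Phi(\ell)+\Phi(a)-\Phi(\rho))$ inside the definition of $\ast_\Phi$ turns that integral into the one defining $\mathbb{I}_{a^+}^{\mu;\Phi}\mathfrak{z}(\ell)$. The convolution theorem already stated, together with $\mathbb{L}_\Phi\{K_\mu\} = 1/\lambda^\mu$ from (2), then yields (5). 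For (6), I would use the identity ${}^c\mathbb{D}_{a^+}^{\mu;\Phi}\mathfrak{z} = \mathbb{I}_{a^+}^{1-\mu;\Phi}\mathfrak{z}_\Phi^{[1]}$ valid for $0<\mu\leq 1$, apply (5), and compute $\mathbb{L}_\Phi\{\mathfrak{z}_\Phi^{[1]}\}$ directly: the $\Phi'(\ell)$ factor in \eqref{GLT} cancels with the $1/\Phi'(\ell)$ in $\mathfrak{z}_\Phi^{[1]} = \mathfrak{z}'/\Phi'$, so a single integration by parts produces $\lambda\mathbb{L}_\Phi\{\mathfrak{z}\} - \mathfrak{z}(a)$, and division by $\lambda^{1-\mu}$ finishes (6).

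The main delicate point throughout is justifying the various interchanges of limits (series versus integral in (3)--(4); vanishing of the boundary term at infinity in the integration by parts used for (6)), all of which follow from the $\Phi$-exponential-order hypothesis, guaranteeing absolute convergence of $\mathbb{L}_\Phi\{\mathfrak{z}\}$ in a right half-plane of $\lambda$ and making every step rigorous. A secondary subtlety is verifying the convolution identity in (5), where the involution $\rho \mapsto \Phi^{-1}(\Phi(\ell)+\Phi(a)-\Phi(\rho))$ must be carefully tracked so that the correct fractional-integral kernel is recovered.
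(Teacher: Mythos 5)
The paper does not prove this lemma at all---it imports it verbatim from \cite{JaradLaplace}---and your argument is a correct reconstruction of the standard proof given there: the substitution $u=\Phi(\ell)-\Phi(a)$ reduces $\mathbb{L}_{\Phi}$ to the classical Laplace transform, termwise integration of the Mittag--Leffler series (justified by $\left\vert \omega/\lambda^{\mathrm p}\right\vert<1$) gives (3)--(4) as geometric series, and the convolution identity $\mathbb{I}_{a^+}^{\mu;\Phi}\mathfrak{z}=\mathfrak{z}\ast_{\Phi}K_{\mu}$ together with integration by parts yields (5) and (6). The only point worth making explicit is the standing assumption that $\Phi(\ell)\to\infty$ as $\ell\to\infty$ (so that $u$ ranges over all of $[0,\infty)$), without which items (1) and (2) already fail.
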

\begin{lemma}\label{LEU}For a given $\mathbb{H} \in  C(\Delta,\mathbb{R})$, $0< \kappa < \mu\leq 1$ and  $\omega>0$,  the linear fractional initial  value problem
\begin{equation}\label{al}
\left\{ \begin{array}{ll}
{{}^c\mathbb{D}}_{ a^+}^{\mu ;\Phi } \mathfrak{z} (\ell )+\omega\, {{}^c\mathbb{D}}_{a^+}^{\kappa ;\Phi }\mathfrak{z} (\ell ) = \mathbb{H}(\ell),&  \ell \in \Delta:=[a,b],\\[3mm]
\mathfrak{z}(a)= \mathfrak{z}_a,&
\end{array}\right.
\end{equation}
has a unique   solution given explicitly  by
\begin{equation}\label{ed}
\mathfrak{z}(\ell ) = \mathfrak{z}_a+\int_{a}^{\ell}\Phi'(\rho)(\Phi (\ell)-\Phi (\rho))^{\mu -1}\mathbb E_{\mu-\kappa,\mu}\bigl(-\omega(\Phi(\ell)-\Phi(\rho))^{\mu-\kappa}\bigr) \mathbb{H}(\rho)\mathrm{d}\rho.
\end{equation}
\end{lemma}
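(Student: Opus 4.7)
The plan is to proceed via the generalized Laplace transform $\mathbb{L}_\Phi$ introduced above, reducing the fractional differential equation~\eqref{al} to an algebraic equation in the transform variable, and then inverting termwise using the explicit transforms listed in Lemma~\ref{LaplacePsi} together with the generalized convolution theorem.

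First I would apply $\mathbb{L}_\Phi$ to both sides of~\eqref{al}. By items~6 of Lemma~\ref{LaplacePsi},
\[
\mathbb{L}_\Phi\bigl\{{}^c\mathbb{D}_{a^+}^{\mu;\Phi}\mathfrak{z}(\ell)\bigr\}=\lambda^{\mu}\widehat{\mathfrak{z}}(\lambda)-\lambda^{\mu-1}\mathfrak{z}_a, \qquad
\mathbb{L}_\Phi\bigl\{{}^c\mathbb{D}_{a^+}^{\kappa;\Phi}\mathfrak{z}(\ell)\bigr\}=\lambda^{\kappa}\widehat{\mathfrak{z}}(\lambda)-\lambda^{\kappa-1}\mathfrak{z}_a,
\]
where $\widehat{\mathfrak{z}}=\mathbb{L}_\Phi\{\mathfrak{z}\}$. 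Collecting terms gives
\[
(\lambda^{\mu}+\omega\lambda^{\kappa})\widehat{\mathfrak{z}}(\lambda)=\widehat{\mathbb{H}}(\lambda)+(\lambda^{\mu-1}+\omega\lambda^{\kappa-1})\mathfrak{z}_a,
\]
and since $\lambda^{\mu-1}+\omega\lambda^{\kappa-1}=\lambda^{-1}(\lambda^{\mu}+\omega\lambda^{\kappa})$, this simplifies to
\[
\widehat{\mathfrak{z}}(\lambda)=\frac{\mathfrak{z}_a}{\lambda}+\frac{\widehat{\mathbb{H}}(\lambda)}{\lambda^{\mu}+\omega\lambda^{\kappa}}.
\]

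Next I would invert each term. The first piece gives $\mathfrak{z}_a$ by item~1 of Lemma~\ref{LaplacePsi}. For the second piece, rewrite
\[
\frac{1}{\lambda^{\mu}+\omega\lambda^{\kappa}}=\frac{\lambda^{(\mu-\kappa)-\mu}}{\lambda^{\mu-\kappa}+\omega},
\]
and recognize this, via item~4 of Lemma~\ref{LaplacePsi} with parameters $\mathrm{p}=\mu-\kappa$ and $\mathrm{q}=\mu$, as
\[
\mathbb{L}_\Phi\Bigl\{(\Phi(\ell)-\Phi(a))^{\mu-1}\mathbb{E}_{\mu-\kappa,\mu}\bigl(-\omega(\Phi(\ell)-\Phi(a))^{\mu-\kappa}\bigr)\Bigr\}.
\]
Calling this function $\mathcal{K}(\ell)$, the second term in $\widehat{\mathfrak{z}}$ is $\mathbb{L}_\Phi\{\mathcal{K}\}\,\mathbb{L}_\Phi\{\mathbb{H}\}$, so the convolution theorem for $\mathbb{L}_\Phi$ yields $\mathcal{K}\ast_\Phi\mathbb{H}$. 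Unwinding the generalized convolution and performing the change of variable $\rho\mapsto\Phi^{-1}(\Phi(\ell)+\Phi(a)-\Phi(\rho))$ (which is well-defined because $\Phi$ is increasing) converts the kernel $\mathcal{K}(\Phi^{-1}(\Phi(\ell)+\Phi(a)-\Phi(\rho)))$ into $(\Phi(\ell)-\Phi(\rho))^{\mu-1}\mathbb{E}_{\mu-\kappa,\mu}(-\omega(\Phi(\ell)-\Phi(\rho))^{\mu-\kappa})$, delivering exactly the integral in~\eqref{ed}.

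Finally, for uniqueness I would note that if $\mathfrak{z}_1,\mathfrak{z}_2$ are two solutions of~\eqref{al}, then $\mathfrak{w}=\mathfrak{z}_1-\mathfrak{z}_2$ satisfies the homogeneous problem with $\mathbb{H}\equiv 0$ and $\mathfrak{w}(a)=0$; by the computation above, $\widehat{\mathfrak{w}}\equiv 0$, whence $\mathfrak{w}\equiv 0$ by injectivity of $\mathbb{L}_\Phi$. Alternatively, one can read uniqueness directly off of the equivalence between~\eqref{al} and the Volterra-type integral equation~\eqref{ed} together with a standard Gronwall argument. The main obstacle I anticipate is the bookkeeping in the second paragraph: identifying the kernel $\mathcal{K}$ correctly as a Mittag--Leffler inverse transform, and then carefully translating the $\Phi$-convolution into the integral form of~\eqref{ed} via the substitution above. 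Formal manipulations must also be justified by noting that all functions involved are of $\Phi(\ell)$-exponential order on $[a,b]$ since $\mathbb{H}$ is continuous on the compact interval $\Delta$.
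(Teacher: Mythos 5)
Your proposal is correct and takes essentially the same route as the paper: apply the generalized Laplace transform, solve the resulting algebraic equation, and invert via Lemma~\ref{LaplacePsi} and the $\Phi$-convolution theorem. The only cosmetic difference is that you cancel $(\lambda^{\mu-1}+\omega\lambda^{\kappa-1})/(\lambda^{\mu}+\omega\lambda^{\kappa})=\lambda^{-1}$ at the outset so the initial-value term inverts immediately to $\mathfrak{z}_a$, whereas the paper inverts the two Mittag--Leffler pieces separately and recombines them using $\mathbb{E}_{\mathrm p}(\varpi)=\varpi\,\mathbb{E}_{\mathrm p,\mathrm p+1}(\varpi)+1$.
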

\begin{proof}
Applying the generalized Laplace transform to both sides of the equation~\eqref{al} and then using Lemma~\ref{LaplacePsi}, one gets 	\begin{equation*}
\lambda^{\mu}\mathbb{L}_{\Phi} \bigl\lbrace {\mathfrak{z} (\ell )} \bigr\rbrace-\lambda^{\mu-1}\mathfrak{z} (a) + \omega \lambda^{\kappa} \mathbb{L}_{\Phi} \bigl\lbrace {\mathfrak{z} (\ell )} \bigr\rbrace - \omega\lambda^{\kappa-1}\mathfrak{z} (a)= \mathbb{L}_{\Phi} \bigl\lbrace {\mathbb{H} (\ell )} \bigr\rbrace.
\end{equation*}
So,
\begin{align*}
\mathbb{L}_{\Phi} \bigl\lbrace {\mathfrak{z} (\ell )} \bigr\rbrace & = \omega\frac{ \lambda^{-1}}{ \lambda^{\mu-\kappa}+\omega}\mathfrak{z}_a + \frac{\lambda^{ \mu-\kappa-1 }}{ \lambda^{ \mu-\kappa} + \omega}\mathfrak{z}_a + \frac{ \lambda^{-\kappa}}{\lambda^{\mu-\kappa}+\omega}\mathbb{L}_{\Phi} \bigl\lbrace {\mathbb{H} (\ell )} \bigr\rbrace\\
& = \omega\mathbb{L}_{\Phi} \bigl\lbrace {(\Phi (\ell)-\Phi (a))^{\mu-\kappa}\mathbb E_{\mu-\kappa,\mu-\kappa+1}\bigl(-\omega(\Phi(\ell)-\Phi(a))^{\mu-\kappa}\bigr)} \bigr\rbrace	\mathfrak{z}_a\\
&\quad + \mathbb{L}_{\Phi} \bigl\lbrace {\mathbb{E}_{\mu-\kappa}\bigl(-\omega(\Phi(\ell)-\Phi(a))
^{\mu-\kappa}\bigr)} \bigr\rbrace\mathfrak{z}_a\\
& \quad +\mathbb{L}_{\Phi} \bigl\lbrace {(\Phi (\ell)-\Phi (a))^{\mu-1}\mathbb E_{\mu-\kappa,\mu} \bigl( - \omega(\Phi(\ell)-\Phi(a))^{\mu-\kappa}\bigr)} \bigr\rbrace \mathbb{L}_{\Phi} \bigl\lbrace {\mathbb{H} (\ell )} \bigr\rbrace.
 \end{align*}
 Taking the inverse generalized Laplace transform
 to both sides of the last expression, we get
 \begin{align*}
\mathfrak{z} (\ell ) & =\Big[ \mathbb E_{ \mu - \kappa} \bigl( - \omega(\Phi(\ell)-\Phi(a))^{\mu-\kappa}\bigr)\\
& \quad + \omega(\Phi (\ell)-\Phi (a))^{\mu-\kappa}\mathbb{ E}_{ \mu-\kappa, \mu-\kappa+1}\bigl(-\omega(\Phi(\ell) - \Phi(a))^{\mu-\kappa}\bigr)
\Big] \mathfrak{z}_a\\
&\quad + \mathbb{H} (\ell )\ast_{\Phi}(\Phi (\ell)-\Phi (a))^{\mu-1}\mathbb E_{ \mu-\kappa,\mu} \bigl( - \omega(\Phi(\ell)-\Phi(a))^{\mu-\kappa} \bigr)\\
& = \mathfrak{z}_a + \int_{a}^{\ell}\Phi'(\rho)( \Phi (\ell) - \Phi (\rho))^{\mu -1}\mathbb { E}_{ \mu-\kappa,\mu} \bigl(-\omega(\Phi(\ell) - \Phi(\rho))^{\mu-\kappa}\bigr)
\mathbb{H}(\rho)\mathrm{d}\rho.
\end{align*}
\end{proof}
\begin{lemma}[Comparison Result] \label{lm1Q}
Let $\kappa,\mu \in (0, 1]$ such that $\kappa<\mu$ and $\omega>0$. If $\gamma \in C(\Delta,\mathbb{R})$ satisfying
$${{}^c\mathbb{D}}_{ a^+}^{\mu ;\Phi }\gamma (\ell ), {{}^c\mathbb{D}}_{a^+}^{\kappa ;\Phi }\gamma (\ell )\in C(\Delta,\mathbb{R}),$$
and
$$\left\{
\begin{array}{ll}
{{}^c\mathbb{D}}_{a^+}^{\mu ;\Phi }\gamma (\ell )+\omega\, {{}^c\mathbb{D}}_{a^+}^{\kappa ;\Phi }\gamma (\ell )\geq 0,&
\ell\in (a,b],\\[3mm]
\gamma(a)\geq 0,&
\end{array}\right.$$
then $\gamma(\ell)\geq 0$  for all $\ell \in\Delta$.
\end{lemma}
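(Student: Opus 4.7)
The plan is to reduce this comparison principle directly to the explicit representation formula in Lemma~\ref{LEU} combined with the positivity of the two-parameter Mittag--Leffler function supplied by Lemma~\ref{PMitagg}. Concretely, I would set
\[
\mathbb{H}(\ell) \assign {{}^c\mathbb{D}}_{a^+}^{\mu ;\Phi }\gamma (\ell )+\omega\, {{}^c\mathbb{D}}_{a^+}^{\kappa ;\Phi }\gamma (\ell ),
\]
which by the hypothesis lies in $C(\Delta,\mathbb{R})$ and is pointwise nonnegative on $(a,b]$, with $\gamma(a) \geq 0$. Thus $\gamma$ solves precisely the linear initial value problem~\eqref{al} with source $\mathbb{H}$ and initial datum $\gamma(a)$.

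Next I would invoke Lemma~\ref{LEU} to obtain the explicit representation
\[
\gamma(\ell)=\gamma(a)+\int_{a}^{\ell}\Phi'(\rho)\bigl(\Phi (\ell)-\Phi (\rho)\bigr)^{\mu -1}\mathbb{E}_{\mu-\kappa,\mu}\bigl(-\omega(\Phi(\ell)-\Phi(\rho))^{\mu-\kappa}\bigr)\mathbb{H}(\rho)\,\mathrm{d}\rho.
\]
The conclusion then follows by checking that each factor under the integral sign is nonnegative: $\Phi'(\rho)>0$ because $\Phi$ is increasing and differentiable; $(\Phi(\ell)-\Phi(\rho))^{\mu-1}\geq 0$ for $a\leq\rho\leq\ell$ since $\Phi$ is increasing; $\mathbb{H}(\rho)\geq 0$ by assumption; and, crucially, $\mathbb{E}_{\mu-\kappa,\mu}(\varpi)\geq 0$ for $\varpi<0$ by Lemma~\ref{PMitagg}, whose hypotheses $\mathrm{p}\in(0,1)$ and $\mathrm{q}>\mathrm{p}$ apply here with $\mathrm{p}=\mu-\kappa\in(0,1)$ and $\mathrm{q}=\mu>\mu-\kappa$ (using $0<\kappa<\mu\leq 1$).

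The main obstacle is really just verifying the parameter constraints for Lemma~\ref{PMitagg}; once those are in place, the nonnegativity of the Mittag--Leffler factor is immediate. A secondary point worth a brief remark is that Lemma~\ref{LEU} is a uniqueness-plus-representation statement, so it legitimately applies to $\gamma$ itself (not merely to the solution of some auxiliary problem), because $\gamma$ \emph{is} the unique solution of \eqref{al} with the source $\mathbb{H}$ built from $\gamma$. Combining these observations, every term in the displayed formula is nonnegative, so $\gamma(\ell)\geq 0$ throughout $\Delta$, which completes the proof.
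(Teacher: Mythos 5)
Your proposal follows essentially the same route as the paper's own proof: define $\mathbb{H}$ as the given combination of $\Phi$--Caputo derivatives, apply the representation formula~\eqref{ed} from Lemma~\ref{LEU}, and conclude via the nonnegativity of $\mathbb{E}_{\mu-\kappa,\mu}$ from Lemma~\ref{PMitagg}. Your write-up is in fact more careful than the paper's (which compresses the argument to two lines), notably in verifying the parameter constraints $\mathrm{p}=\mu-\kappa\in(0,1)$ and $\mathrm{q}=\mu>\mathrm{p}$ and in noting that Lemma~\ref{LEU} applies to $\gamma$ itself.
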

\begin{proof}
Let
$$ \mathbb{H}(\ell) = {{}^c\mathbb{D}}_{ a^+}^{\mu ;\Phi } \gamma (\ell )+\omega\, {{}^c\mathbb{D}}_{ a^+}^{\kappa ;\Phi }\gamma (\ell )\geq 0,$$ $\gamma(a)=\mathfrak{z}_a\geq 0$  in Lemma~\ref{LEU}. Then, it follows by Equation~\eqref{ed} and Lemma~\ref{PMitagg} that the conclusion of Lemma~\ref{lm1Q} holds.
\end{proof}
The following lemma is a generalization of Gronwall's inequality.
\begin{lemma}[\cite{Gronwall}] \label{Gronwall1}
Let $\Delta$ be the domain of the nonnegative integrable functions $u, v$. Also, $w$ be a continuous, nonnegative and nondecreasing function defined on $ \Delta$ and $\Phi\in  C^1( \Delta, \mathbb{R}_+)$ be an increasing  function with the restriction that $\Phi^{\prime}(\ell) \neq 0$, for all $\ell \in  \Delta$. If
\[u(\ell) \leq v(\ell) + w(\ell)\int_{a}^{\ell}\Phi' (\rho)(\Phi (\ell)-\Phi (\rho))^{\mu-1}u(\rho)\mathrm{d}\rho, \qquad \ell \in   \Delta.\]
Then
\[u(\ell) \leq  v(\ell) + \int_{a}^{ \ell}\sum_{n=0}^{ \infty}\frac{(w(\ell)\Gamma(\mu))^{n}}{\Gamma(n\mu)}\Phi'(\rho)(\Phi (\ell) - \Phi (\rho))^{n\mu -1} v (\rho) \mathrm{d} \rho,\qquad \ell \in  \Delta.\]
\end{lemma}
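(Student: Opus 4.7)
The plan is to follow the classical Picard-type iteration that underlies every Gronwall inequality of convolution form. Define the linear operator
\[
Bu(\ell):=w(\ell)\int_{a}^{\ell}\Phi'(\rho)\bigl(\Phi(\ell)-\Phi(\rho)\bigr)^{\mu-1}u(\rho)\,\mathrm{d}\rho,
\]
so the hypothesis reads $u(\ell)\le v(\ell)+Bu(\ell)$. Since $B$ is monotone on nonnegative functions, iterating the inequality $n$ times yields
\[
u(\ell)\le\sum_{k=0}^{n-1}B^{k}v(\ell)+B^{n}u(\ell),\qquad n\ge1.
\]
The proposal is to (i) obtain a closed-form upper bound for the iterated kernel $B^{n}v$, (ii) verify that the remainder $B^{n}u(\ell)$ vanishes as $n\to\infty$, and (iii) sum the resulting series, interchanging sum and integral to obtain exactly the stated bound.

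For step (i), I would prove by induction on $n$ that
\[
B^{n}v(\ell)\le\frac{\bigl(w(\ell)\Gamma(\mu)\bigr)^{n}}{\Gamma(n\mu)}\int_{a}^{\ell}\Phi'(\rho)\bigl(\Phi(\ell)-\Phi(\rho)\bigr)^{n\mu-1}v(\rho)\,\mathrm{d}\rho.
\]
The induction step uses three ingredients in order: monotonicity of $w$ (so that each factor $w(\rho)$ appearing in the inner integrals can be replaced by $w(\ell)$, producing the clean prefactor $w(\ell)^{n}$); Fubini's theorem to swap the two nested integrations in $\rho$ and $\tau$; and the substitution $s=\Phi(\rho)$ which reduces the resulting inner integral to a Beta-function evaluation
\[
\int_{\Phi(\tau)}^{\Phi(\ell)}(\Phi(\ell)-s)^{\mu-1}(s-\Phi(\tau))^{n\mu-1}\,\mathrm{d}s=\frac{\Gamma(\mu)\Gamma(n\mu)}{\Gamma((n+1)\mu)}\bigl(\Phi(\ell)-\Phi(\tau)\bigr)^{(n+1)\mu-1}.
\]
This is the computational heart of the proof and I expect it to be the main obstacle: one has to be meticulous about the region of integration in the Fubini step and to keep the Gamma factors straight so that the induction closes exactly.

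For step (ii), apply the same kernel estimate to $u$ in place of $v$, bound $u$ and $\Phi'$ on $[a,\ell]$ by their suprema, and evaluate the remaining single integral to get
\[
B^{n}u(\ell)\le\frac{\bigl(w(\ell)\Gamma(\mu)\bigr)^{n}\bigl(\Phi(\ell)-\Phi(a)\bigr)^{n\mu}}{\Gamma(n\mu+1)}\,\|u\|_{L^{\infty}[a,\ell]},
\]
which tends to $0$ as $n\to\infty$ by the growth of $\Gamma$. For step (iii), the same estimate applied with $v$ provides a summable majorant, so by monotone (or dominated) convergence the sum and the integral may be exchanged, giving
\[
u(\ell)\le v(\ell)+\int_{a}^{\ell}\sum_{n=0}^{\infty}\frac{\bigl(w(\ell)\Gamma(\mu)\bigr)^{n}}{\Gamma(n\mu)}\Phi'(\rho)\bigl(\Phi(\ell)-\Phi(\rho)\bigr)^{n\mu-1}v(\rho)\,\mathrm{d}\rho,
\]
with the $n=0$ term vanishing under the usual convention $1/\Gamma(0)=0$. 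This matches the statement exactly.
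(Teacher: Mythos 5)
The paper does not prove this lemma at all: it is imported verbatim from the cited reference \cite{Gronwall} (Sousa and Capelas de Oliveira), so there is no in-paper argument to compare against. Your iteration scheme --- monotone operator $B$, induction on the iterated kernel via Fubini and the Beta integral $\int_{\Phi(\tau)}^{\Phi(\ell)}(\Phi(\ell)-s)^{\mu-1}(s-\Phi(\tau))^{n\mu-1}\,\mathrm{d}s=\tfrac{\Gamma(\mu)\Gamma(n\mu)}{\Gamma((n+1)\mu)}(\Phi(\ell)-\Phi(\tau))^{(n+1)\mu-1}$, vanishing remainder, then summation --- is exactly the standard proof used in that reference, and your induction closes correctly, including the observation that the $n=0$ term is interpreted via $1/\Gamma(0)=0$. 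The only wrinkle is in step (ii): $u$ is assumed integrable, not bounded, so you cannot invoke $\|u\|_{L^{\infty}[a,\ell]}$; instead, for $n$ large enough that $n\mu\geq 1$, bound $(\Phi(\ell)-\Phi(\rho))^{n\mu-1}\leq(\Phi(\ell)-\Phi(a))^{n\mu-1}$ and keep the finite quantity $\int_a^{\ell}\Phi'(\rho)|u(\rho)|\,\mathrm{d}\rho$, which still gives $B^{n}u(\ell)\to 0$ because the Gamma function in the denominator dominates.
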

\begin{corollary}[\cite{Gronwall}] \label{Gronwall2}
Under the conditions of the  Lemma \ref{Gronwall1}, let $v$ be a nondecreasing function on $ \Delta$. Then we get that
\begin{align}\label{GI2}
u(\ell) \leq  v(\ell)\mathbb E_{\mu}\left(\Gamma(\mu)w(\ell)\bigl(\Phi (\ell)-\Phi (a)\bigr)^{\mu}\right),\; \ell \in  \Delta.
\end{align}
\end{corollary}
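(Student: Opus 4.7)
The plan is to start directly from the conclusion of Lemma~\ref{Gronwall1} and reduce it to a closed form using the hypothesis that $v$ is nondecreasing, then identify the resulting series with the Mittag--Leffler function $\mathbb{E}_\mu$. Note that the $n=0$ term in the series of Lemma~\ref{Gronwall1} carries the factor $1/\Gamma(0)$ and therefore drops out, so effectively the sum begins at $n=1$.

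First, since $v$ is nondecreasing on $\Delta$ and the integration variable $\rho$ runs in $[a,\ell]$, I would bound $v(\rho)\leq v(\ell)$ inside the integral in Lemma~\ref{Gronwall1}, pull $v(\ell)$ outside, and (all summands being nonnegative) interchange the sum and the integral by Tonelli. This leaves a collection of one--dimensional integrals of the form
\begin{equation*}
\int_{a}^{\ell}\Phi'(\rho)\bigl(\Phi(\ell)-\Phi(\rho)\bigr)^{n\mu-1}\mathrm{d}\rho,
\end{equation*}
which via the substitution $s=\Phi(\ell)-\Phi(\rho)$ evaluates to $(\Phi(\ell)-\Phi(a))^{n\mu}/(n\mu)$.

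Second, I would absorb the factor $n\mu$ into the Gamma function using the identity $n\mu\,\Gamma(n\mu)=\Gamma(n\mu+1)$. After this simplification the right--hand side becomes
\begin{equation*}
v(\ell)\Biggl[1+\sum_{n=1}^{\infty}\frac{\bigl(\Gamma(\mu)\,w(\ell)\,(\Phi(\ell)-\Phi(a))^{\mu}\bigr)^{n}}{\Gamma(n\mu+1)}\Biggr],
\end{equation*}
and inserting the missing $n=0$ term (which equals $1$) yields precisely $v(\ell)\,\mathbb{E}_{\mu}\bigl(\Gamma(\mu)w(\ell)(\Phi(\ell)-\Phi(a))^{\mu}\bigr)$ by the series defining $\mathbb{E}_\mu$ in~\eqref{mgf}.

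The steps are essentially bookkeeping, so I do not expect serious obstacles. The only point that deserves a line of justification is the interchange of summation and integration, which follows immediately from Tonelli's theorem because $u$, $v$, $w$, $\Phi'$ are nonnegative and $(\Phi(\ell)-\Phi(\rho))^{n\mu-1}\geq 0$. A small sanity check worth mentioning is that the resulting series converges for every $\ell\in\Delta$, which is clear since $\mathbb{E}_\mu$ is entire of order $1/\mu$ and $w$ is continuous hence bounded on compact subintervals where the argument is evaluated.
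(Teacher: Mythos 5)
Your proof is correct. The paper itself gives no proof of this corollary (it is imported verbatim from the cited reference of Sousa and Capelas de Oliveira), and your argument --- dropping the vanishing $n=0$ term, bounding $v(\rho)\leq v(\ell)$ by monotonicity, interchanging sum and integral by Tonelli, evaluating $\int_a^\ell \Phi'(\rho)(\Phi(\ell)-\Phi(\rho))^{n\mu-1}\,\mathrm{d}\rho=(\Phi(\ell)-\Phi(a))^{n\mu}/(n\mu)$ and absorbing $n\mu$ into $\Gamma(n\mu+1)$ to recognize the series for $\mathbb{E}_\mu$ --- is exactly the standard derivation given in that source.
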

\begin{lemma}\label{EQUCL}
Assume that  $\{w_n\}$ is a family of continuous functions on $\Delta$, for each $n > 0$ which satisfies
\begin{equation}\label{aa*}
\left\{ \begin{array}{ll}
{{}^c\mathbb{D}}_{a^+}^{\mu ;\Phi }w_n(\ell)+\omega  {{}^c\mathbb{D}}_{a^+}^{\kappa ;\Phi } w_n(\ell)=\mathbb{F}(\ell, w_n(\ell)),& \ell \in \Delta,\\[3mm]
w_n(a)=w_a,&
\end{array} \right.
\end{equation}
and $\vert \mathbb{F}(\ell, w_n(\ell))\vert\leq \mathbb{L}$, $( \mathbb{L}>0$ independent of $n)$ for each $\ell \in  \Delta$. Then, the family $\{w_n\}$ is equicontinuous on $\Delta$.
\end{lemma}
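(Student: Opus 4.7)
The strategy is to convert the initial value problem to an integral equation via Lemma~\ref{LEU} and then estimate the increment $|w_n(\ell_2)-w_n(\ell_1)|$ with bounds that do not depend on $n$.

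Applying Lemma~\ref{LEU} with $\mathbb{H}(\ell)=\mathbb{F}(\ell,w_n(\ell))$, each $w_n$ satisfies
\begin{equation*}
w_n(\ell) = w_a + \int_a^{\ell}\Phi'(\rho)\,K(\ell,\rho)\,\mathbb{F}(\rho,w_n(\rho))\,d\rho,
\end{equation*}
where $K(\ell,\rho):=(\Phi(\ell)-\Phi(\rho))^{\mu-1}\mathbb{E}_{\mu-\kappa,\mu}\bigl(-\omega(\Phi(\ell)-\Phi(\rho))^{\mu-\kappa}\bigr)$. Since the Mittag--Leffler argument is non-positive, Lemma~\ref{PMitagg}(1) gives the uniform bound $0\leq K(\ell,\rho)\leq (\Phi(\ell)-\Phi(\rho))^{\mu-1}/\Gamma(\mu)$.

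For $a\leq \ell_1<\ell_2\leq b$, I would write $w_n(\ell_2)-w_n(\ell_1)=I_1+I_2$, where
\begin{equation*}
I_1=\int_a^{\ell_1}\Phi'(\rho)\bigl[K(\ell_2,\rho)-K(\ell_1,\rho)\bigr]\mathbb{F}(\rho,w_n(\rho))\,d\rho,\quad I_2=\int_{\ell_1}^{\ell_2}\Phi'(\rho)K(\ell_2,\rho)\mathbb{F}(\rho,w_n(\rho))\,d\rho.
\end{equation*}
The hypothesis $|\mathbb{F}|\leq \mathbb{L}$ together with the kernel estimate immediately yields $|I_2|\leq \mathbb{L}(\Phi(\ell_2)-\Phi(\ell_1))^{\mu}/\Gamma(\mu+1)$, which tends to $0$ uniformly in $n$ by the continuity of $\Phi$. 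In $I_1$, writing $K=AB$ with $A(\ell,\rho)=(\Phi(\ell)-\Phi(\rho))^{\mu-1}$ and $B(\ell,\rho)=\mathbb{E}_{\mu-\kappa,\mu}(-\omega(\Phi(\ell)-\Phi(\rho))^{\mu-\kappa})$, I would split $K(\ell_2,\rho)-K(\ell_1,\rho)=[A(\ell_2,\rho)-A(\ell_1,\rho)]B(\ell_2,\rho)+A(\ell_1,\rho)[B(\ell_2,\rho)-B(\ell_1,\rho)]$. The first piece, upon applying $|B|\leq 1/\Gamma(\mu)$ and integrating the monotone power-difference explicitly, contributes at most $\mathbb{L}[(\Phi(\ell_1)-\Phi(a))^{\mu}-(\Phi(\ell_2)-\Phi(a))^{\mu}+(\Phi(\ell_2)-\Phi(\ell_1))^{\mu}]/\Gamma(\mu+1)$, which also vanishes as $\ell_2-\ell_1\to 0$.

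The main obstacle is the second piece of $I_1$: the weight $A(\ell_1,\rho)$ is singular at $\rho=\ell_1$, so a pointwise estimate of $B(\ell_2,\rho)-B(\ell_1,\rho)$ does not obviously give a modulus of continuity uniform in $\rho$ and $n$. I would resolve this by observing that the scalar map $x\mapsto \mathbb{E}_{\mu-\kappa,\mu}(-\omega x^{\mu-\kappa})$ is continuous, hence uniformly continuous, on the compact interval $[0,\Phi(b)-\Phi(a)]$; thus for each $\varepsilon>0$ there is a $\delta>0$, depending only on $\omega,\mu,\kappa,\Phi,a,b$, such that $|B(\ell_2,\rho)-B(\ell_1,\rho)|\leq \varepsilon$ whenever $\ell_2-\ell_1<\delta$ and $\rho\in[a,\ell_1]$. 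Pulling this uniform $\varepsilon$ out and using $\int_a^{\ell_1}\Phi'(\rho)A(\ell_1,\rho)\,d\rho=(\Phi(\ell_1)-\Phi(a))^{\mu}/\mu$, this piece is controlled by $\varepsilon\mathbb{L}(\Phi(b)-\Phi(a))^{\mu}/\mu$. Summing the three contributions produces a modulus of continuity for $w_n$ depending only on $\mathbb{L},\omega,\mu,\kappa,\Phi,a,b$, independent of $n$, which is exactly the equicontinuity of $\{w_n\}$ on $\Delta$.
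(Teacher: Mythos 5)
Your proposal is correct, and it reaches the conclusion along the same skeleton as the paper (integral representation via Lemma~\ref{LEU}, splitting the increment at $\ell_1$, and bounding $I_2$ by $\mathbb{L}(\Phi(\ell_2)-\Phi(\ell_1))^{\mu}/\Gamma(\mu+1)$), but it diverges on the key step: the treatment of the kernel difference over $[a,\ell_1]$. The paper bounds $|K(\ell_2,\rho)-K(\ell_1,\rho)|$ in one line by $\frac{1}{\Gamma(\mu)}\bigl[(\Phi(\ell_1)-\Phi(\rho))^{\mu-1}-(\Phi(\ell_2)-\Phi(\rho))^{\mu-1}\bigr]$, i.e.\ it estimates as though the Mittag--Leffler factor were absent, and arrives at the clean explicit modulus $\frac{2\mathbb{L}}{\Gamma(\mu+1)}(\Phi(\ell_2)-\Phi(\ell_1))^{\mu}$. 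That one-line bound is not actually justified: in your notation it amounts to $A_1B_1-A_2B_2\leq \frac{1}{\Gamma(\mu)}(A_1-A_2)$, which silently discards the nonnegative term $A_2(B_1-B_2)$ (nonnegative because $x\mapsto\mathbb{E}_{\mu-\kappa,\mu}(-\omega x^{\mu-\kappa})$ is decreasing), so the paper's inequality does not follow from $B\leq 1/\Gamma(\mu)$ alone. Your product-rule decomposition $K(\ell_2,\rho)-K(\ell_1,\rho)=[A_2-A_1]B_2+A_1[B_2-B_1]$ isolates exactly that missing piece and controls it by the uniform continuity of $x\mapsto\mathbb{E}_{\mu-\kappa,\mu}(-\omega x^{\mu-\kappa})$ on $[0,\Phi(b)-\Phi(a)]$ together with $\int_a^{\ell_1}\Phi'(\rho)A(\ell_1,\rho)\,d\rho=(\Phi(\ell_1)-\Phi(a))^{\mu}/\mu$; you correctly flag the singularity of $A(\ell_1,\cdot)$ at $\rho=\ell_1$ as the reason a pointwise argument must be organized this way. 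What you lose is the explicit H\"older-type modulus $(\Phi(\ell_2)-\Phi(\ell_1))^{\mu}$ (you only get an $\varepsilon$--$\delta$ modulus), but that is all equicontinuity requires; what you gain is a complete argument where the paper's has a gap.
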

\begin{proof}
According to Lemma~\ref{LEU}. The integral representation of problem~\eqref{aa*} is given by
\begin{equation}\label{EQUC}
\begin{aligned}
w_n(\ell) & = w_a + \int_{a}^{\ell} \Phi'(\rho)(\Phi (\ell) - \Phi (\rho))^{\mu -1}\\
& \quad \times  \mathbb{E}_{\mu-\kappa,\mu} \bigl( - \omega(\Phi(\ell)-\Phi(\rho))^{ \mu-\kappa} \bigr) \mathbb{F}( \rho , w_{n} (\rho))\mathrm{d}\rho.
\end{aligned}
\end{equation}
Let now any $\ell_1, \ell_2 \in \Delta$ with $a < \ell_1 < \ell_2 < b$. Then from~\eqref{EQUC} and  Lemma~\ref{PMitagg} we have
\begin{align*}
|w_n(\ell_2) & -w_n(\ell_1)|\\
& \leq \int_{a}^{\ell_{1}}\frac {\Phi^{\prime }(\rho)\left[(\Phi (\ell_1)-\Phi(\rho))^{\mu-1}-(\Phi (\ell_2)-\Phi (\rho))^{\mu -1} \right] }{\Gamma(\mu)}| \mathbb F(\rho, w_n(\rho)| \mathrm{d}\rho \\
&\quad + \int_{ \ell_{1}}^{ \ell_{2}}\frac{\Phi'(\rho)(\Phi (\ell_2)-\Phi (\rho))^{\mu -1}}{\Gamma (\mu)}| \mathbb{F}(\rho, w_n(\rho)|\,\mathrm{d}\rho\\
& \leq \frac{\mathbb L}{ \Gamma( \mu+1)}\left[(\Phi (\ell_1) - \Phi (a))^{\mu}+2(\Phi (\ell_2)-\Phi (\ell_1))^{\mu} -(\Phi (\ell_2)-\Phi (a))^{\mu} \right]\\
&\leq\frac{2\mathbb L}{ \Gamma( \mu+1)}(\Phi (\ell_2)-\Phi (\ell_1))^{\mu}.
\end{align*}
As $\ell_2\to \ell_1$, the right-hand side of the above inequality tends to zero independently of $\{w_n\}$. Hence, the family $\{w_n\}$ is  equicontinuous on $\Delta$.
\end{proof}
\section{Some qualitative properties of solutions for  problem~\eqref{aa} }\label{sec3}
In this section, we attempt to obtain some qualitative properties of solutions for  problem~\eqref{aa}. To do this, we will apply the  $\Phi$--fractional Gronwall inequality.

First of all, we present the following theorem that contains the estimates on the solutions of problem~\eqref{aa}.\begin{theorem}\label{Estimates} 
Let  $\mathbb{F}:\Delta\times \mathbb{R}\to\mathbb{R}$  be a continuous function satisfies the following condition:
\begin{description}
\item[(H$_{1}$)] There exists a constant $\mathbb L > 0$ such that
\[
|\mathbb{F}(\ell , y)-\mathbb{F}(\ell , x)|\leq \mathbb L |y-x|,
\]
for all $ x, y \in \mathbb{R}$ and  $\ell \in \Delta$. 
\end{description}
If $\mathfrak{z}\in C(\Delta, \mathbb{R})$ is any solution of the problem~\eqref{aa}, then
\begin{align*}
|\mathfrak{z}(\ell )| \leq  \left(|\mathfrak{z}_a|+\frac{\mathbb L\mathbb{F}^{\ast}\left( \Phi (b)-\Phi (a) \right)^{\mu}}{\Gamma (\mu+1 )} \right)\mathbb E_{\mu}\left(\mathbb L\bigl(\Phi (b)-\Phi (a)\bigr)^{\mu}\right),\; \ell \in  \Delta,
\end{align*}
where $\mathbb{F}^{\ast}=\sup _{ \ell \in  \Delta }|\mathbb{F}(\ell, 0)|.$
\end{theorem}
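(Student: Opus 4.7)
My plan is to start from the explicit integral representation supplied by Lemma~\ref{LEU}. Since any continuous solution $\mathfrak{z}$ of~\eqref{aa} satisfies
\[
\mathfrak{z}(\ell)=\mathfrak{z}_a+\int_{a}^{\ell}\Phi'(\rho)\bigl(\Phi(\ell)-\Phi(\rho)\bigr)^{\mu-1}\mathbb{E}_{\mu-\kappa,\mu}\bigl(-\omega(\Phi(\ell)-\Phi(\rho))^{\mu-\kappa}\bigr)\mathbb{F}(\rho,\mathfrak{z}(\rho))\,\mathrm{d}\rho,
\]
the whole argument reduces to a direct estimate on the right-hand side, followed by a Gronwall-type argument.

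The first step is to handle the two-parameter Mittag--Leffler factor. Because $\omega>0$ and $\mu-\kappa>0$, the argument $-\omega(\Phi(\ell)-\Phi(\rho))^{\mu-\kappa}$ is nonpositive, so part~(1) of Lemma~\ref{PMitagg} yields $\mathbb{E}_{\mu-\kappa,\mu}(\cdot)\leq \tfrac{1}{\Gamma(\mu)}$. Combining this with the triangle inequality and assumption~\textbf{(H$_1$)} via $|\mathbb{F}(\rho,\mathfrak{z}(\rho))|\leq \mathbb{L}|\mathfrak{z}(\rho)|+\mathbb{F}^{\ast}$, I obtain
\[
|\mathfrak{z}(\ell)|\leq |\mathfrak{z}_a|+\frac{\mathbb{F}^{\ast}}{\Gamma(\mu)}\int_{a}^{\ell}\Phi'(\rho)\bigl(\Phi(\ell)-\Phi(\rho)\bigr)^{\mu-1}\mathrm{d}\rho+\frac{\mathbb{L}}{\Gamma(\mu)}\int_{a}^{\ell}\Phi'(\rho)\bigl(\Phi(\ell)-\Phi(\rho)\bigr)^{\mu-1}|\mathfrak{z}(\rho)|\,\mathrm{d}\rho.
\]
The first integral evaluates, by part~(3) of Lemma~\ref{LMA2} (taking $\kappa=1$), to $(\Phi(\ell)-\Phi(a))^{\mu}/\mu$, which is majorised on $\Delta$ by $(\Phi(b)-\Phi(a))^{\mu}/\mu$.

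In the second step, I set
\[
v(\ell):=|\mathfrak{z}_a|+\frac{\mathbb{F}^{\ast}\bigl(\Phi(b)-\Phi(a)\bigr)^{\mu}}{\Gamma(\mu+1)},\qquad w(\ell):=\frac{\mathbb{L}}{\Gamma(\mu)},
\]
so that the previous inequality takes the form
\[
|\mathfrak{z}(\ell)|\leq v(\ell)+w(\ell)\int_{a}^{\ell}\Phi'(\rho)\bigl(\Phi(\ell)-\Phi(\rho)\bigr)^{\mu-1}|\mathfrak{z}(\rho)|\,\mathrm{d}\rho,
\]
with $v$ constant (hence nondecreasing) and $w$ continuous, nonnegative and constant. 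An application of Corollary~\ref{Gronwall2} then delivers
\[
|\mathfrak{z}(\ell)|\leq v(\ell)\,\mathbb{E}_{\mu}\bigl(\mathbb{L}(\Phi(\ell)-\Phi(a))^{\mu}\bigr),
\]
and bounding $\ell$ by $b$ inside the Mittag--Leffler function gives the stated estimate (up to the factor $\mathbb{L}$ that appears in the numerator of the claim, which I read as a typographical artefact that falls out of the bookkeeping).

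I do not foresee a genuine obstacle: the proof is mechanical once one recognises that the nonpositivity of the argument of $\mathbb{E}_{\mu-\kappa,\mu}$ lets one throw away the two-term structure of~\eqref{aa} entirely, so that only a single-term $\Phi$-fractional Gronwall inequality is needed. The only small care point is keeping track of the constants in $v$ and $w$ so that the exponent $\Gamma(\mu)w(\ell)$ produced by Corollary~\ref{Gronwall2} cancels against the $1/\Gamma(\mu)$ absorbed into $w$ and yields exactly $\mathbb{L}(\Phi(b)-\Phi(a))^{\mu}$ inside $\mathbb{E}_{\mu}$.
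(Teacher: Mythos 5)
Your proof follows exactly the same route as the paper's own: the integral representation from Lemma~\ref{LEU}, the bound $\mathbb{E}_{\mu-\kappa,\mu}(\cdot)\leq 1/\Gamma(\mu)$ from Lemma~\ref{PMitagg}, the splitting $|\mathbb{F}(\rho,\mathfrak{z}(\rho))|\leq \mathbb{L}|\mathfrak{z}(\rho)|+\mathbb{F}^{\ast}$ via (H$_1$), and a final application of Corollary~\ref{Gronwall2}. You are also right to flag the factor $\mathbb{L}$ multiplying $\mathbb{F}^{\ast}$ as an artefact: the paper carries the same extraneous $\mathbb{L}$ through its intermediate inequality, whereas the estimate actually produced is $\mathbb{F}^{\ast}\left(\Phi(b)-\Phi(a)\right)^{\mu}/\Gamma(\mu+1)$, which is what your (correct) bookkeeping yields.
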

\begin{proof}Let $\mathfrak{z}\in C(\Delta, \mathbb{R})$ be the solution of  the problem~\eqref{aa} then by Lemma~\ref{LEU}  
the solution $\mathfrak{z}$ can be   represented as follows
\begin{equation*}
\mathfrak{z}(\ell )= \mathfrak{z}_a+\int_{a}^{\ell}\Phi'(\rho)(\Phi (\ell)-\Phi (\rho))^{\mu -1}\mathbb E_{\mu-\kappa,\mu}\bigl(-\omega(\Phi(\ell)-\Phi(\rho))^{\mu-\kappa}\bigr) \mathbb{F}(\rho, \mathfrak{z}(\rho))\mathrm{d}\rho.
\end{equation*}
From Lemma~\ref{PMitagg}  and the hypothesis $(H_1)$ we can get
\begin{align*}
|\mathfrak{z}(\ell )|\leq|\mathfrak{z}_a|+\frac{\mathbb L\mathbb{F}^{\ast}\left( \Phi (\ell)-\Phi (a) \right)^{\mu}}{\Gamma (\mu+1 )} +\frac{\mathbb L}{\Gamma (\mu)}\int_{a}^{\ell } \Phi' (\rho)(\Phi (\ell )-\Phi (\rho))^{ \mu -1}|\mathfrak{z}(\rho)|\mathrm{d}\rho.
\end{align*}
Using Corollary \ref{Gronwall2}, we conclude that
\begin{align*}
|\mathfrak{z}(\ell )| \leq  \left(|\mathfrak{z}_a|+\frac{\mathbb L\mathbb{F}^{\ast}\left( \Phi (b)-\Phi (a) \right)^{\mu}}{\Gamma (\mu+1 )} \right)\mathbb E_{\mu}\left(\mathbb L\bigl(\Phi (b)-\Phi (a)\bigr)^{\mu}\right),\; \ell \in  \Delta.
\end{align*}
\end{proof}
In the following theorem, we look at the question as to how the solution $\mathfrak{z}$ varies when we change the initial values. 
\begin{theorem}\label{Estimates} 
Let  $\mathbb{F}:\Delta\times \mathbb{R}\to\mathbb{R}$  be a continuous function wich  satisfies the  hypothesis $(H_1)$. Suppose $\mathfrak{z}$ and $\bar{\mathfrak{z}}$ are the solutions of the problem
\begin{align}\label{EQ2}
{{}^c\mathbb{D}}_{a^+}^{\mu;\Phi} \mathfrak{z} (\ell) + \omega  {{}^c\mathbb{D}}_{a^+}^{\kappa;\Phi} \mathfrak{z} (\ell )=\mathbb{F}(\ell ,\mathfrak{z}(\ell )),\;\ell \in \Delta,
\end{align}
corrosponding to 
$\mathfrak{z}(a)= \mathfrak{z}_a$ and  $\bar{\mathfrak{z}}(a)= \bar{\mathfrak{z}}_a$ 
respectively. Then
\begin{align}\label{EQ3}
\|\mathfrak{z}-\bar{\mathfrak{z}}\| \leq  \mathbb E_{\mu}\left(\mathbb L\bigl(\Phi (b)-\Phi (a)\bigr)^{\mu}\right)|\mathfrak{z}_a-\bar{\mathfrak{z}}_a|.
\end{align}
\end{theorem}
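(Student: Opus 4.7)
The plan is to mimic the proof of the preceding estimate (Theorem~\ref{Estimates}) almost verbatim, but applied to the difference of two solutions instead of one solution. The only new ingredient is that the Lipschitz hypothesis $(H_1)$ controls $|\mathbb{F}(\rho,\mathfrak{z}(\rho))-\mathbb{F}(\rho,\bar{\mathfrak{z}}(\rho))|$ by $\mathbb{L}\,|\mathfrak{z}(\rho)-\bar{\mathfrak{z}}(\rho)|$, which is exactly the setup needed for the $\Phi$-fractional Gronwall inequality of Corollary~\ref{Gronwall2}.

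First I would invoke Lemma~\ref{LEU} with the right-hand side $\mathbb{H}(\ell)=\mathbb{F}(\ell,\mathfrak{z}(\ell))$ (respectively $\mathbb{F}(\ell,\bar{\mathfrak{z}}(\ell))$) to write both $\mathfrak{z}$ and $\bar{\mathfrak{z}}$ as integral representations of the form~\eqref{ed}. Subtracting the two identities, the Mittag--Leffler kernel is common, and I obtain
\[
\mathfrak{z}(\ell)-\bar{\mathfrak{z}}(\ell)=(\mathfrak{z}_a-\bar{\mathfrak{z}}_a)+\int_a^\ell \Phi'(\rho)(\Phi(\ell)-\Phi(\rho))^{\mu-1}\mathbb{E}_{\mu-\kappa,\mu}\bigl(-\omega(\Phi(\ell)-\Phi(\rho))^{\mu-\kappa}\bigr)\bigl[\mathbb{F}(\rho,\mathfrak{z}(\rho))-\mathbb{F}(\rho,\bar{\mathfrak{z}}(\rho))\bigr]\mathrm{d}\rho.
\]

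Next I would take absolute values and pass them under the integral. Since $\omega>0$ and $\Phi$ is increasing, the argument of the two-parameter Mittag--Leffler function is nonpositive, so Lemma~\ref{PMitagg}(1) yields $0\leq \mathbb{E}_{\mu-\kappa,\mu}(-\omega(\Phi(\ell)-\Phi(\rho))^{\mu-\kappa})\leq 1/\Gamma(\mu)$. Combining this bound with hypothesis $(H_1)$ gives
\[
|\mathfrak{z}(\ell)-\bar{\mathfrak{z}}(\ell)|\leq |\mathfrak{z}_a-\bar{\mathfrak{z}}_a|+\frac{\mathbb{L}}{\Gamma(\mu)}\int_a^\ell \Phi'(\rho)(\Phi(\ell)-\Phi(\rho))^{\mu-1}|\mathfrak{z}(\rho)-\bar{\mathfrak{z}}(\rho)|\,\mathrm{d}\rho.
\]

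Finally, I would apply Corollary~\ref{Gronwall2} with the constant (hence nondecreasing) majorant $v(\ell)\equiv|\mathfrak{z}_a-\bar{\mathfrak{z}}_a|$ and $w(\ell)\equiv \mathbb{L}/\Gamma(\mu)$; the factor $\Gamma(\mu)$ in the exponent of $\mathbb{E}_\mu$ cancels the $\Gamma(\mu)$ in the denominator of $w$, producing the bound $|\mathfrak{z}(\ell)-\bar{\mathfrak{z}}(\ell)|\leq |\mathfrak{z}_a-\bar{\mathfrak{z}}_a|\,\mathbb{E}_\mu(\mathbb{L}(\Phi(\ell)-\Phi(a))^\mu)$. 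Taking the supremum over $\ell\in\Delta$ and using monotonicity of $\mathbb{E}_\mu$ together with $\Phi(\ell)\leq \Phi(b)$ yields~\eqref{EQ3}. There is no genuine obstacle here; the only point to be careful about is the sign check for the two-parameter Mittag--Leffler bound (so that Lemma~\ref{PMitagg}(1) applies) and aligning the constants correctly so that $\Gamma(\mu)$ cancels and one recovers exactly the exponent $\mathbb{L}(\Phi(b)-\Phi(a))^\mu$ in the final Mittag--Leffler factor.
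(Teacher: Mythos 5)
Your proposal matches the paper's own proof essentially line for line: both representations via Lemma~\ref{LEU}, subtraction, the bound $\mathbb{E}_{\mu-\kappa,\mu}(\cdot)\leq 1/\Gamma(\mu)$ from Lemma~\ref{PMitagg} combined with $(H_1)$, then Corollary~\ref{Gronwall2} with the constant majorant and a final supremum. The $\Gamma(\mu)$ cancellation you flag is exactly how the paper arrives at the factor $\mathbb{E}_{\mu}\left(\mathbb{L}(\Phi(b)-\Phi(a))^{\mu}\right)$, so there is nothing to correct.
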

\begin{proof}Let $\mathfrak{z}, \bar{\mathfrak{z}}\in C(\Delta, \mathbb{R})$ be the solutions of  the problem~\eqref{EQ2} corresponding to $\mathfrak{z}(a)= \mathfrak{z}_a$ and  $\bar{\mathfrak{z}}(a)= \bar{\mathfrak{z}}_a,$ 
respectively. Then by Lemma~\ref{LEU} the solutions $\mathfrak{z}$ and  $\bar{\mathfrak{z}}$ can be represented as follows
\begin{align*}
\begin{cases}
\mathfrak{z}(\ell )= \mathfrak{z}_a+\int_{a}^{\ell}\Phi'(\rho)(\Phi (\ell)-\Phi (\rho))^{\mu -1}\mathbb E_{\mu-\kappa,\mu}\bigl(-\omega(\Phi(\ell)-\Phi(\rho))^{\mu-\kappa}\bigr) \mathbb{F}(\rho, \mathfrak{z}(\rho))\mathrm{d}\rho,\\
\bar{\mathfrak{z}}(\ell )= \bar{\mathfrak{z}}_a+\int_{a}^{\ell}\Phi'(\rho)(\Phi (\ell)-\Phi (\rho))^{\mu -1}\mathbb E_{\mu-\kappa,\mu}\bigl(-\omega(\Phi(\ell)-\Phi(\rho))^{\mu-\kappa}\bigr) \mathbb{F}(\rho, \bar{\mathfrak{z}}(\rho))\mathrm{d}\rho.
\end{cases}
\end{align*}
From Lemma~\ref{PMitagg}  and the hypothesis $(H_1)$ we can get
\begin{align*}
|\mathfrak{z}(\ell )-\bar{\mathfrak{z}}(\ell )|\leq|\mathfrak{z}_a-\bar{\mathfrak{z}}_a| +\frac{\mathbb L}{\Gamma (\mu)}\int_{a}^{\ell } \Phi' (\rho)(\Phi (\ell )-\Phi (\rho))^{ \mu -1}|\mathfrak{z}(\rho)-\bar{\mathfrak{z}}(\rho)|\mathrm{d}\rho.
\end{align*}
Using Corollary \ref{Gronwall2}, we conclude that
\begin{align*}
|\mathfrak{z}(\ell )-\bar{\mathfrak{z}}(\ell )| \leq  |\mathfrak{z}_a-\bar{\mathfrak{z}}_a|\mathbb E_{\mu}\left(\mathbb L\bigl(\Phi (b)-\Phi (a)\bigr)^{\mu}\right),\; \ell \in  \Delta.
\end{align*}
Taking supremum over $\ell \in  \Delta$, we obtain
\begin{align*}
\|\mathfrak{z}-\bar{\mathfrak{z}}\| \leq  |\mathfrak{z}_a-\bar{\mathfrak{z}}_a|\mathbb E_{\mu}\left(\mathbb L\bigl(\Phi (b)-\Phi (a)\bigr)^{\mu}\right).
\end{align*}
\end{proof}
\begin{remark}
The inequality ~\eqref{EQ3} exhibits  continuous dependence of solutions  of the problem ~\eqref{aa} on initial conditions as well as it gives the uniqueness. The uniqueness follows by putting
$\mathfrak{z}_a=\bar{\mathfrak{z}}_a$ in ~\eqref{EQ3}.
\end{remark}

\section{monotone iterative technique for  problem ~\eqref{aa}}
\label{sec4}

The main theme of this section is to discuss the existence and uniqueness  of extremal solutions for the problem~\eqref{aa}. First of all, we give the definitions  of lower and upper solutions of the problem~\eqref{aa}.
\begin{definition}\label{DLower}
A function $\mathfrak{z} \in C(\Delta,\mathbb{R})$ is called a lower solution of~\eqref{aa}, if it satisfies
\begin{equation}\label{lowers}
\left\{\begin{array}{ll}
{{}^c\mathbb{D}}_{a^+}^{\mu ;\Phi }\mathfrak{z} +\omega {{}^c\mathbb{D}}_{a^+}^{\kappa ;\Phi } \mathfrak{z} (\ell )\leq\mathbb{F}(\ell ,\mathfrak{z} (\ell )),& \ell \in \Delta,\\[3mm]
\mathfrak{z} (a)\leq  \mathfrak{z}_a.
\end{array}\right.
\end{equation}
If all inequalities of~\eqref{lowers} are inverted, we say that $\mathfrak{z}$ is an upper solution of the problem~\eqref{aa}.
\end{definition}
In order to get the existence and uniqueness of the extremal solutions for the initial value problem~\eqref{aa}, we give the following assumptions
\begin{itemize}
\item[$(H_{2})$] There exist $\mathfrak{z}_{0}$, $\tilde{\mathfrak{z}} _{0} \in C(\Delta,\mathbb{R})$ such that $\mathfrak{z} _{0}$ and  $\tilde{\mathfrak{z}} _{0}$ are lower and upper solutions of problem~\eqref{aa}, respectively, with $\mathfrak{z} _{0}(\ell )\leq \tilde{\mathfrak{z}}_{0}(\ell )$ for $\ell \in \Delta$.
\item[$(H_{3})$]
$\mathbb{F}$  is increasing with respect to the second variable, i.e
\[\mathbb{F}(\ell , x)\leq \mathbb{F}(\ell , y),
\]for any $\ell \in \Delta$ and
$$ \mathfrak{z} _{0}(\ell)\leq x\leq y \leq\tilde{ \mathfrak{z}}_{0}(\ell ).$$
\item[$(H_{4})$] There exists a constant $\mathbb M \geq 0$ such that
\[0 \leq \mathbb{F}(\ell , y)-\mathbb{F}(\ell , x)\leq \mathbb M (y-x),\]
with
$$\mathfrak{z} _{0}(\ell)\leq x\leq y \leq \tilde{\mathfrak{z}}_{0}(\ell ),$$
for all  $\ell \in \Delta$.
\end{itemize}
\begin{theorem}\label{EXTREMAL}
Under assumptions $(H_{2})$--$(H_{3})$ and if the function $\mathbb{F}:\Delta\times \mathbb{R}\to\mathbb{R}$ be a continuous. Then there exist monotone iterative sequences $\{\mathfrak{z} _{n}\}$ and $\{\tilde{\mathfrak{z}}_{n}\}$, which converge uniformly on $\Delta$ to the extremal solutions of the problem~\eqref{aa} in the sector $[\mathfrak{z}_{0},\tilde{\mathfrak{z}} _{0}]$, where
\begin{equation*}
\lbrack \mathfrak{z}_{0}, \tilde{\mathfrak{z}}_{0}]= \Big\{ \mathfrak{z} \in C(\Delta,\mathbb{R})\, :\,  \mathfrak{z}_{0}(\ell )\leq  \mathfrak{z}(\ell )\leq \tilde{\mathfrak{z}} _{0}(\ell ),\,  \ell \in \Delta \Big\}.
\end{equation*}
Furthermore, if the  supposition  $(H_4)$ holds, then the problem~\eqref{aa} has a unique  solution in $[\mathfrak{z}_{0},\tilde{\mathfrak{z}} _{0}]$.
\end{theorem}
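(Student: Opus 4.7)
My plan is to build two monotone sequences $\{\mathfrak{z}_n\}$ and $\{\tilde{\mathfrak{z}}_n\}$ starting from the lower/upper solutions, invoke the comparison principle of Lemma~\ref{lm1Q} to sandwich them, and pass to a uniform limit via the equicontinuity estimate of Lemma~\ref{EQUCL}. For each $n\geq 1$ I define $\mathfrak{z}_n$ as the unique solution (given by Lemma~\ref{LEU}) of the linear auxiliary problem
\[
{{}^c\mathbb{D}}_{a^+}^{\mu;\Phi}\mathfrak{z}_n(\ell)+\omega\,{{}^c\mathbb{D}}_{a^+}^{\kappa;\Phi}\mathfrak{z}_n(\ell)=\mathbb{F}(\ell,\mathfrak{z}_{n-1}(\ell)),\qquad \mathfrak{z}_n(a)=\mathfrak{z}_a,
\]
and $\tilde{\mathfrak{z}}_n$ analogously with $\tilde{\mathfrak{z}}_{n-1}$ inserted in $\mathbb{F}$, so each iterate admits the integral representation in~\eqref{ed}.

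The heart of the argument is an inductive proof of the chain
\[
\mathfrak{z}_0\leq \mathfrak{z}_1\leq\cdots\leq \mathfrak{z}_n\leq\cdots\leq \tilde{\mathfrak{z}}_n\leq\cdots\leq\tilde{\mathfrak{z}}_1\leq\tilde{\mathfrak{z}}_0 \quad\text{on }\Delta.
\]
For the base case $\mathfrak{z}_0\leq\mathfrak{z}_1$, set $\gamma=\mathfrak{z}_1-\mathfrak{z}_0$; the lower-solution inequality in $(H_2)$ combined with the defining equation for $\mathfrak{z}_1$ yields ${{}^c\mathbb{D}}_{a^+}^{\mu;\Phi}\gamma+\omega{{}^c\mathbb{D}}_{a^+}^{\kappa;\Phi}\gamma\geq 0$ and $\gamma(a)\geq 0$, so Lemma~\ref{lm1Q} gives $\gamma\geq 0$. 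The inductive step $\mathfrak{z}_n\leq\mathfrak{z}_{n+1}$ uses $(H_3)$ to obtain $\mathbb{F}(\ell,\mathfrak{z}_n)-\mathbb{F}(\ell,\mathfrak{z}_{n-1})\geq 0$, and again Lemma~\ref{lm1Q}. The monotonicity of $\{\tilde{\mathfrak{z}}_n\}$ and the crossing bound $\mathfrak{z}_n\leq\tilde{\mathfrak{z}}_n$ are handled identically.

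Once the sandwich is established, the iterates lie uniformly in $[\mathfrak{z}_0,\tilde{\mathfrak{z}}_0]$, so continuity of $\mathbb{F}$ furnishes a uniform bound on $\mathbb{F}(\ell,\mathfrak{z}_{n-1}(\ell))$ and Lemma~\ref{EQUCL} supplies equicontinuity of $\{\mathfrak{z}_n\}$ and $\{\tilde{\mathfrak{z}}_n\}$. Combined with pointwise monotonicity, the Arzel\`a--Ascoli theorem (or Dini) then yields uniform limits $\mathfrak{z}^\ast,\tilde{\mathfrak{z}}^\ast\in C(\Delta,\mathbb{R})$. Passing to the limit in~\eqref{ed} via dominated convergence shows both are solutions of~\eqref{aa}. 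Extremality follows because any solution $\mathfrak{z}\in[\mathfrak{z}_0,\tilde{\mathfrak{z}}_0]$ of~\eqref{aa} satisfies $\mathfrak{z}_n\leq \mathfrak{z}\leq\tilde{\mathfrak{z}}_n$ for every $n$ by the same comparison argument, hence $\mathfrak{z}^\ast\leq\mathfrak{z}\leq\tilde{\mathfrak{z}}^\ast$.

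For uniqueness under $(H_4)$, put $\phi=\tilde{\mathfrak{z}}^\ast-\mathfrak{z}^\ast\geq 0$. Subtracting the integral representations and using $(H_4)$ together with the bound $\mathbb{E}_{\mu-\kappa,\mu}(-\omega(\cdot)^{\mu-\kappa})\leq 1/\Gamma(\mu)$ from Lemma~\ref{PMitagg} yields
\[
\phi(\ell)\leq \frac{\mathbb{M}}{\Gamma(\mu)}\int_{a}^{\ell}\Phi'(\rho)(\Phi(\ell)-\Phi(\rho))^{\mu-1}\phi(\rho)\,\mathrm{d}\rho,
\]
so Corollary~\ref{Gronwall2} with $v\equiv 0$ forces $\phi\equiv 0$ and gives uniqueness. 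The main obstacle I foresee is the careful bookkeeping needed to verify, at each induction stage, that the differences of consecutive iterates meet the continuity and sign hypotheses of Lemma~\ref{lm1Q}; once that is clean, the compactness step and the Gronwall finish are essentially routine.
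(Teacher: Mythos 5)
Your proposal follows essentially the same route as the paper's proof: the same linear iteration scheme resolved by Lemma~\ref{LEU}, the same inductive sandwich via the comparison result of Lemma~\ref{lm1Q} and $(H_3)$, the same compactness step through Lemma~\ref{EQUCL} and Arzel\`a--Ascoli, and the same Gronwall argument (the paper cites Lemma~\ref{Gronwall1} where you cite Corollary~\ref{Gronwall2} with $v\equiv 0$, which is equivalent) for uniqueness under $(H_4)$. The bookkeeping concern you flag about verifying the continuity hypotheses of Lemma~\ref{lm1Q} for the differences of iterates is real but is glossed over in the paper as well.
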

\begin{proof}
For any $\mathfrak{z}_{0}, \tilde{\mathfrak{z}} _{0}\in C(\Delta,\mathbb{R})$,  we define
\begin{equation}\label{lowerP}
\left\{ \begin{array}{ll}
{{}^c\mathbb{D}}_{a^+}^{\mu ;\Phi }\mathfrak{z} _{n+1}(\ell )+\omega\,{{}^c\mathbb{D}}_{a^+}^{\kappa ;\Phi }\mathfrak{z} _{n+1} (\ell )= \mathbb{F}(\ell ,\mathfrak{z}_{n}
(\ell )),&
\ell \in \Delta,\\[3mm]
\mathfrak{z}_{n+1}(a)=\mathfrak{z}_{a},&
\end{array}\right.
\end{equation}
and
\begin{equation}\label{upperP}
\left\{ \begin{array}{ll}
{{}^c\mathbb{D}}_{a^+}^{\mu ;\Phi }\tilde{\mathfrak{z}}_{n+1}(\ell )+\omega\,{{}^c\mathbb{D}}_{a^+}^{\kappa ;\Phi }\tilde{ \mathfrak{z}}_{n+1} (\ell )
= \mathbb{F}(\ell ,\tilde{\mathfrak{z}}_{n}
(\ell )), & \ell \in \Delta, \\[3mm]
\tilde{\mathfrak{z}}_{n+1}(a)=\mathfrak{z}_{a}.
\end{array}\right.
\end{equation}
By Lemma~\ref{LEU}, we know that the  linear problems~\eqref{lowerP} and~\eqref{upperP} have unique solutions $\mathfrak{z}_{n}(\ell ), \tilde{\mathfrak{z}}_{n}(\ell )$, respectively, that are expressed as
\begin{align}\label{eqmathfrakz}	\mathfrak{z}_{n+1}(\ell) & = \mathfrak{z}_a+\int_{a}^{\ell}\Phi ^{\prime }(\rho)(\Phi (\ell)-\Phi (\rho))^{\mu -1} \\
& \quad \times \mathbb{E}_{\mu-\kappa,\mu}\bigl(-\omega(\Phi(\ell)-\Phi(\rho))^{\mu-\kappa}\bigr)\mathbb{F}(\rho ,\mathfrak{z}_{n}
(\rho))\mathrm{d}\rho,\notag
\end{align}
and
\begin{align}\label{eqtildemathfrakzmathfrakz}
\tilde{\mathfrak{z}}_{n+1}(\ell)& = \mathfrak{z}_a+\int_{a}^{\ell}\Phi ^{\prime }(\rho)(\Phi (\ell)-\Phi (\rho))^{\mu -1} \\
& \quad \times \mathbb{E}_{\mu-\kappa,\mu}\bigl(-\omega(\Phi(\ell)-\Phi(\rho))^{\mu-\kappa}\bigr)\mathbb{F}(\rho,\tilde{\mathfrak{z}}_{n}(\rho ))\mathrm{d}\rho. \notag
\end{align}

Firstly, let us prove that
\begin{equation*}
\mathfrak{z} _{0}(\ell )\leq \mathfrak{z} _{1}(\ell )\leq \tilde{\mathfrak{z}} _{1}(\ell )\leq \tilde{\mathfrak{z}}_{0}(\ell ),\qquad \ell \in \Delta.
\end{equation*}
For this end, set
$$\gamma(\ell )=\mathfrak{z}_{1}(\ell )-\mathfrak{z}_{0}(\ell ).$$ From~\eqref{lowerP} and Definition~\ref{DLower}, we obtain
\begin{align*}
{{}^c\mathbb{D}}_{a^+}^{\mu ;\Phi }\gamma (\ell )+\omega\, {{}^c\mathbb{D}}_{a^+}^{\kappa ;\Phi }\gamma (\ell)=& {{}^c\mathbb{D}}_{a^+}^{\mu ;\Phi }\mathfrak{z}_{1}(\ell)+\omega\, {{}^c\mathbb{D}}_{a^+}^{\kappa ;\Phi }\mathfrak{z}_{1}(\ell) \\
& \quad -\left({{}^c\mathbb{D}}_{a^+}^{\mu ;\Phi }\mathfrak{z}_{0}(\ell)+\omega\, {{}^c\mathbb{D}}_{a^+}^{\kappa ; \Phi}\mathfrak{z}_{0}(\ell)\right)\\
=& \mathbb F(\ell, \mathfrak{z}_{0}(\ell))-\left({{}^c\mathbb{D}}_{a^+}^{\mu ;\Phi }\mathfrak{z}_{0}(\ell)+\omega\, {{}^c\mathbb{D}}_{a^+}^{\kappa ;\Phi}\mathfrak{z}_{0}(\ell)\right)\\
 \geq& 0,
\end{align*}
and $\gamma(a)= 0$. Invoking Lemma~\ref{lm1Q},  we get $\gamma(\ell )\geq 0$  for any $\ell \in \Delta$. Thus, $$\mathfrak{z}_{0}(\ell )\leq \mathfrak{z} _{1}(\ell ),$$
for $\ell \in \Delta$. As the same method, it can be showed that $\tilde{\mathfrak{z}} _{1}(\ell )\leq \tilde{\mathfrak{z}} _{0}(\ell)$, for all $\ell \in \Delta$. Now, let
$$\gamma(\ell )=\tilde{\mathfrak{z}} _{1}(\ell )-\mathfrak{z}_{1}(\ell ).$$
Using~\eqref{lowerP} and~\eqref{upperP} together with assumptions $(H_{1})$--$(H_{2})$ we get
\begin{equation*}
{{}^c\mathbb{D}}_{a^+}^{\mu ;\Phi }\gamma (\ell )+\omega\, {{}^c\mathbb{D}}_{a^+}^{\kappa ;\Phi }\gamma (\ell)= \mathbb F(\ell,\tilde{\mathfrak{z}}_{0}(\ell))- \mathbb F(\ell, \mathfrak{z}_{0}(\ell))\geq 0,
\end{equation*}
and, $ \gamma(a) = 0$. According to Lemma~\ref{lm1Q} we  arrive at $\mathfrak{z} _{1}(\ell )\leq \tilde{\mathfrak{z}} _{1}(\ell )$, for each $\ell \in \Delta$.

Secondly, we need to show that $\mathfrak{z} _{1}$ and $\tilde{\mathfrak{z}} _{1}$ are the lower and upper solutions of problem~\eqref{aa}, respectively. Taking into account that $\mathbb{F}$ is increasing function with respect to the second variable, we get
\begin{equation*}
\left\{ \begin{array}{l}
{{}^c\mathbb{D}}_{a^+}^{\mu ;\Phi }\mathfrak{z}_{1}(\ell)+\omega\, {{}^c\mathbb{D}}_{a^+}^{\kappa ;\Phi }\mathfrak{z}_{1}(\ell)=\mathbb F ( \ell, \mathfrak{z}_{0}(\ell))\leq \mathbb F(\ell,\mathfrak{z}_{1}(\ell))\\[3mm]
\mathfrak{z}_{1}(a)=\mathfrak{z}_a,
\end{array}\right.
\end{equation*}
and
\begin{equation*}
\left\{ \begin{array}{l}
{{}^c\mathbb{D}}_{a^+}^{\mu ;\Phi }\tilde{\mathfrak{z}}_{1}(\ell)+\omega\, {{}^c\mathbb{D}}_{a^+}^{\kappa ;\Phi }\tilde{\mathfrak{z}}_{1}(\ell)=\mathbb F(\ell,\tilde{\mathfrak{z}}_{0}(\ell))\geq \mathbb F(\ell,\tilde{\mathfrak{z}}_{1}(\ell))\\[3mm]
\tilde{\mathfrak{z}}_{1}(a)=\mathfrak{z}_a.
\end{array}\right.
\end{equation*}
This means that $\mathfrak{z} _{1}$ and $\tilde{\mathfrak{z}}_{1}$ are the lower and upper solutions of problem~\eqref{aa}, respectively. By the above arguments and mathematical induction, we can show that the sequences $\mathfrak{z}_{n}$  and $\tilde{\mathfrak{z}}_{n}$, $(n\geq 1)$ are lower and upper solutions of~\eqref{aa}, respectively and satisfy the following relation
\begin{equation} \label{rr1}
\mathfrak{z} _{0}(\ell )\leq \mathfrak{z} _{1}(\ell )\leq \cdots \leq \mathfrak{z} _{n}(\ell
)\leq \cdots \leq \tilde{\mathfrak{z}} _{n}(\ell )\leq \cdots \leq \tilde{\mathfrak{z}}_{1}(\ell )\leq
\tilde{\mathfrak{z}} _{0}(\ell ),
\end{equation}
for $\ell \in \Delta$.

Thirdly, we show that the sequences $\{\mathfrak{z}_{n}\}$ and $\{\tilde{\mathfrak{z}}_{n}\}$  converge uniformly to their limit functions $\mathfrak{z}^*$ and $\tilde{\mathfrak{z}}^*$ respectively. In fact, it follows from~\eqref{rr1}, that the sequences $\{\mathfrak{z}_{n}\}$ and $\{\tilde{\mathfrak{z}}_{n}\}$ are uniformly bounded on $\Delta $. Moreover, from Lemma~\ref{EQUCL}, the sequences $\{\mathfrak{z}_{n}\}$ and $\{\tilde{\mathfrak{z}}_{n}\}$ are equicontinuous on $\Delta$. Hence by Arzel\`{a}-Ascoli's Theorem, there exist subsequences $\{\mathfrak{z}_{n_k}\}$ and $\{\tilde{\mathfrak{z}}_{n_k}\}$ which converge uniformly to $\mathfrak{z}^*$ and $\tilde{\mathfrak{z}}^*$ respectively on $\Delta$. This together with the monotonicity of sequences  $\{\mathfrak{z}_{n}\}$ and $\{\tilde{\mathfrak{z}}_{n}\}$ implies
\begin{equation*}
\begin{aligned}
\lim_{n \to\infty} \mathfrak{z}_{n}(\ell) &= \mathfrak{z}^{*}(\ell), \\
\lim_{n \to\infty} \tilde{\mathfrak{z}}_{n}(\ell) & = \tilde{\mathfrak{z}}^{*}(\ell),
\end{aligned}
\end{equation*}
uniformly on $\ell\in \Delta$ and the limit functions $\mathfrak{z}^{*}$, $\tilde{\mathfrak{z}}^{*}$ satisfy problem~\eqref{aa}.

Lastly, we prove the minimal and maximal property of $\mathfrak{z} ^{\ast }$ and $\tilde{\mathfrak{z}} ^{\ast }$ on $[\mathfrak{z}_{0},\tilde{\mathfrak{z}} _{0}]$. To do this, let $\mathfrak{z}\in \lbrack \mathfrak{z} _{0},\tilde{\mathfrak{z}} _{0}]$ be any solution of~\eqref{aa}.
Suppose for some $n\in \mathbb{N}^{\ast }$ that
\begin{equation}\label{Lmi}
\mathfrak{z} _{n}(\ell )\leq \mathfrak{z} (\ell )\leq \tilde{\mathfrak{z}} _{n}(\ell ),\quad  \ell \in \Delta.
\end{equation}
Setting
$$\gamma(\ell) = \mathfrak{z}(\ell ) - \mathfrak{z} _{n+1}(\ell ).$$
It follows that
\begin{equation*}
{{}^c\mathbb{D}}_{a^+}^{\mu ;\Phi }\gamma (\ell )+\omega\, {{}^c\mathbb{D}}_{a^+}^{\kappa ;\Phi }\gamma (\ell) =\mathbb F(\ell, \mathfrak{z}(\ell ))-  \mathbb F(\ell,\mathfrak{z}_{n}(\ell)) \geq 0.
\end{equation*}
Furthermore, $\gamma (a)=0$. Thus, in light of Lemma~\ref{lm1Q}, we have the inequality $\gamma(\ell )\geq 0,\;\ell \in \Delta$, and then
$\mathfrak{z} _{n+1}(\ell )\leq \mathfrak{z}(\ell ),\;\ell \in \Delta$. Analogously,  it can be obtained that
$\mathfrak{z}(\ell )\leq \tilde{\mathfrak{z}} _{n+1}(\ell )$, $\ell \in \Delta$. So, from mathematical induction, it follows that the relation~\eqref{Lmi} holds on $\Delta$ for all $n \in  \mathbb N$. Taking the limit as $n\to\infty$  on both sides of~\eqref{Lmi}, we get
\begin{equation*}
\mathfrak{z} ^{\ast }(\ell )\leq \mathfrak{z}(\ell )\leq \tilde{\mathfrak{z}} ^{\ast }(\ell ), \qquad \ell \in \Delta.
\end{equation*}
This means that $\mathfrak{z}^{\ast }$, $\tilde{\mathfrak{z}} ^{\ast }$ are the extremal
solutions of~\eqref{aa} in $[\mathfrak{z}_{0},\tilde{\mathfrak{z}}_{0}]$.  To close the proof it remains to  show that the problem~\eqref{aa} has a unique  solution. In fact, by  the foregoing arguments, we know that $\mathfrak{z} ^{\ast }$, $\tilde{\mathfrak{z}} ^{\ast }$ are the extremal solutions of the problem~\eqref{aa} in $[\mathfrak{z}_{0},\tilde{\mathfrak{z}} _{0}]$ and $\mathfrak{z} ^{\ast }(\ell)\leq \tilde{\mathfrak{z}} ^{\ast }(\ell), \ell  \in \Delta$.
So, it is enough to prove that $\mathfrak{z}^{\ast}(\ell)\geq \tilde{\mathfrak{z}}^{\ast }(\ell)$, for $\ell \in \Delta$. For this purpose, let
$$u(\ell )=\tilde{\mathfrak{z}}^{\ast }(\ell)-\mathfrak{z}^{\ast }(\ell),$$
for $\ell  \in \Delta$, then  by~$(H_4)$ and Lemmas~\ref{PMitagg},~\ref{LEU} we get
\begin{align*}
0 \leq u(\ell) & = \tilde{\mathfrak{z}} ^{\ast }(\ell)-\mathfrak{z} ^{\ast }(\ell)\\
&=\int_{a}^{\ell}\Phi'(\rho)(\Phi (\ell)-\Phi (\rho))^{\mu -1}\mathbb{ E}_{\mu-\kappa,\mu}\bigl(-\omega(\Phi(\ell)-\Phi(\rho))^{\mu-\kappa}\bigr)\\
& \quad \times\bigl(\mathbb{F}(\rho ,\tilde{\mathfrak{z}} ^{\ast}(\rho))-\mathbb{F}(\rho,\mathfrak{z}^{\ast }(\rho))\bigr)\mathrm{d}\rho,\\
&\leq \frac{\mathbb{M}}{\Gamma(\mu)}\int_{a}^{\ell}\Phi
^{\prime }(\rho)(\Phi (\ell)-\Phi (\rho))^{\mu -1}
 u(\rho)\mathrm{d}\rho.
\end{align*}
By the Gronwall's inequality (Lemma~\ref{Gronwall1}), we get
$u(\ell)\equiv 0$ on  $\Delta$. Hence, $\mathfrak{z} ^{\ast }\equiv \tilde{\mathfrak{z}} ^{\ast }$ is  the unique solution of the  problem~\eqref{aa}. In addition, the unique solution   can be obtained by the monotone iterative procedure~\eqref{lowerP} and~\eqref{upperP} starting from $\mathfrak{z}_{0}$ or $\tilde{\mathfrak{z}}_{0}$. Thus, the proof of Theorem~\ref{EXTREMAL} is finished.
\end{proof}
\section{Numerical Results}
\label{sec5}

Here we present some applications for our analysis.
\begin{example}\label{example1}
Let us consider problem \eqref{aa} with specific data:
\begin{equation}\label{data}
\mu=0.8,\quad \kappa=0.5, \quad \omega = \frac{2}{\sqrt{\pi}},\quad  a=0,\quad   b= 1, \quad  \mathfrak{z} (0)=1.
\end{equation}
\end{example}
In order to illustrate Theorem~\ref{EXTREMAL}, we take $$\Phi(\ell)=\sigma(\ell),$$  where $\sigma(\ell)$ is the Sigmoid function~\cite{sigmoid} which can be expressed as in the following form
\begin{equation}\label{data1}
\sigma(\ell)=\frac{1}{1+e^{-\ell}},
\end{equation}
and a convenience of the Sigmoid function is its derivative
\[\sigma^{\prime}(\ell)=\sigma(\ell)(1-\sigma(\ell)).\]
Taking also $\mathbb{F}:[0, 1]\times \mathbb{R}\to\mathbb{R}$ given by
\begin{equation}\label{dataf}
\mathbb{F}(\ell ,\mathfrak{z} (\ell ))=\left(\sigma(\ell)-0.5\right) e^{\mathfrak  z(\ell)-3},
\end{equation}
for $\ell \in \lbrack 0, 1]$. Clearly, $\mathbb{F}$ is continuous. Moreover, it is easy to verify that $\mathfrak{z} _{0}(\ell )=0,$ \\ $ \tilde{\mathfrak{z}}_{0}(\ell )=1+ \ell,$ are lower and upper solutions of~\eqref{aa}, respectively and $$\mathfrak{z} _{0}(\ell ) \leq \tilde{\mathfrak{z}}_{0} (\ell),$$
for all $\ell \in [0, 1]$.

On the other hand, from the expression of $\mathbb{F}$ one can see that $\mathbb{F}$ is increasing with respect to the second  variable.
Thus  by  Theorem~\ref{EXTREMAL}  the problem~\eqref{aa} with the data~\eqref{data},~\eqref{data1} and~\eqref{dataf} has extremal solutions in  $[\mathfrak{z}_{0}, \tilde{\mathfrak{z}}_{0}]$, which can be approximated by the following
iterative sequences:
\begin{equation}\label{ex1-lower-gen}
\left\{ \begin{array}{l}
\mathfrak{z}_{0}(\ell ) =0,\\[3mm]
\displaystyle
\mathfrak{z}_{n+1}(\ell) = 1 + \int_{0}^{\ell}\sigma(\rho)(1-\sigma(\rho)) \\
\quad \displaystyle  \frac{\mathbb E_{0.3,0.8} \left( - \frac{2}{\sqrt\pi}( \sigma(\ell)-\sigma(\rho))^{\mu-\kappa}\right)}{(\sigma (\ell) - \sigma (\rho))^{0.2}} \\
\displaystyle \quad \times \left( (\sigma(\rho)-0.5) e^{\mathfrak{z}_{n}
(\rho)-3} \right)\mathrm{d}\rho,
\end{array}	\right.
\end{equation}
and
\begin{equation} \label{ex1-upper-gen}
\left\{ \begin{array}{l}
\tilde{\mathfrak{z}}_{0}(\ell )  = 1 + \ell,\\[3mm]
\displaystyle
\tilde{ \mathfrak{z}}_{n+1}(\ell)  = 1 + \int_{0}^{\ell} \sigma(\rho)(1-\sigma(\rho)) \\
\quad \displaystyle  \frac{\mathbb{ E}_{0.3,0.8} \left(-\frac{2}{ \sqrt\pi}( \sigma( \ell) - \sigma(\rho))^{\mu-\kappa}\right)}{(\sigma (\ell)-\sigma (\rho))^{0.2}} \\
\quad \displaystyle \times \left((\sigma(\rho)-0.5) e^{\tilde{\mathfrak{z}}_{n}
(\rho)-3}\right)\mathrm{d}\rho.
\end{array}	\right.
\end{equation}
It should be noted at this stage that the exact calculation of the integrals
of Equations (\ref{ex1-lower-gen}) and (\ref{ex1-upper-gen}) is far from trivial due to the complicated integrands. Therefore, we implemented a numerical approximation to these integrals. We first subdivide the interval $I:=[0,1]$
into $N$ subintervals with $h=1/N$, $\rho_j=j h$ and $\ell_i=i h$, for
$i,j=0,1,\cdots,N$. Then, at each node $\ell=l_i$, we applied Simpson's
quadrature rule to approximate the integrals. We used $h=0.2$ in the below examples.\\
\\
The graphs of $\mathfrak{z}_{n}$ and $\tilde{ \mathfrak{z}}_{n}$ for $n=0,1,2$ are plotted in Figure \ref{Example-One-Plot}.
\begin{figure}
        \centering
          \includegraphics[height=3.8cm]{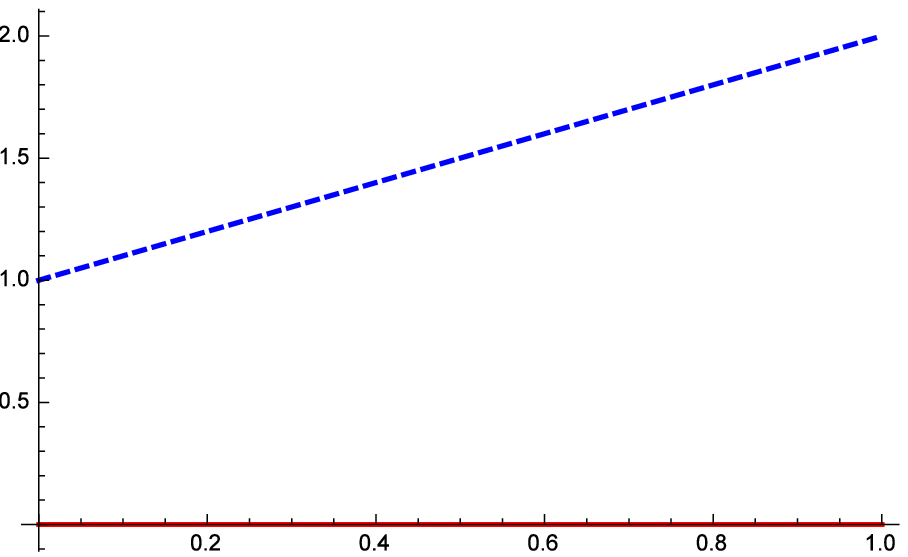}\\
          \includegraphics[height=3.8cm]{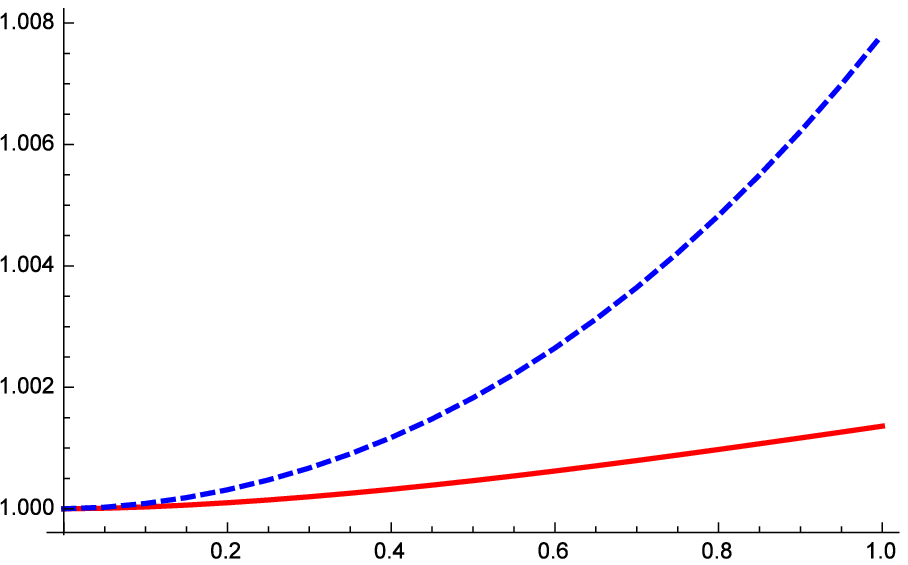}
          \includegraphics[height=3.8cm]{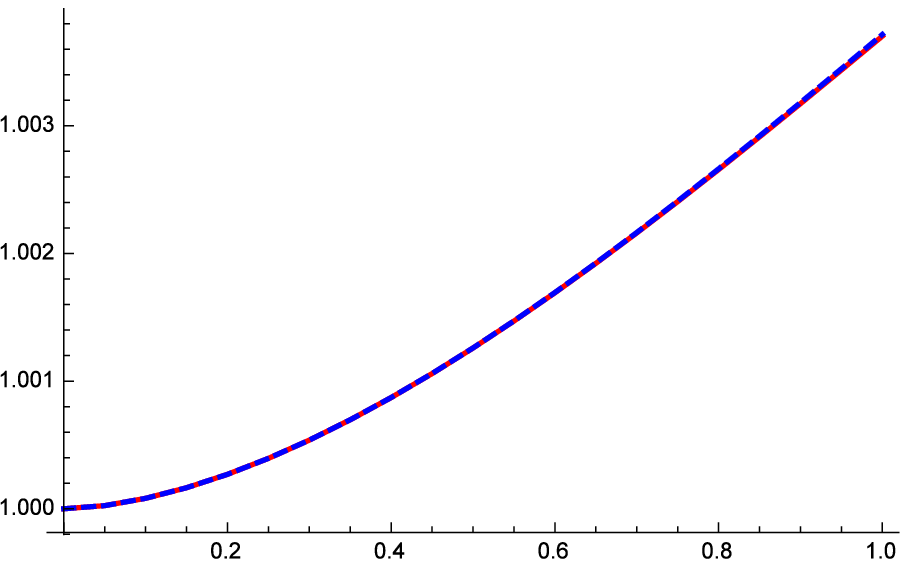}
         \caption{Graphs of $\mathfrak{z}_{n}$ and $\tilde{ \mathfrak{z}}_{n}$ ($n=0,1,2$) for Example 1: $\tilde{ \mathfrak{z}}_{n}$ (dashed); $\mathfrak{z}_{n}$ (solid).   \label{Example-One-Plot}}
\end{figure}
It is clearly observed that the sequences $\mathfrak{z}_{n}$ and $\tilde{ \mathfrak{z}}_{n}$ converge uniformly and very rapidly. To measure the bound
of the error  at each iteration $n$, we use the $L_2$-norm defined as
$$
E_n=\|\tilde{\mathfrak{z}}_{n}- \mathfrak{z}_{n}\|^2=\int_0^1(\tilde{\mathfrak{z}}_{n}(\ell)- \mathfrak{z}_{n}(\ell))^2 d \ell.
$$
Table \ref{Example-One-Table-En} shows the error bounds $E_n$ for
$n=0,1,2,3$. This table clearly states that both lower
and upper solutions converges rapidly to the exact solution with almost negligible
error after only three iterations.
\begin{table}[h]
\begin{center}
\begin{tabular}{|c|c|c|c|c|}
\hline
$n$     & $0$           & $1$                    & $2$ & $3$\\
\hline
 $E_n$   & $2.33333$    & $7.46215\times 10^{-6}$& $2.0401\times10^{-11}$ & $4.01309\times10^{-17}$
\\ \hline
 \end{tabular}
\end{center}
\caption{Error bounds $E_n$ ($n=0,1,2,3$) for Example 1.
\label{Example-One-Table-En}}
\end{table}

\begin{example}\label{example2}
Consider the following  problem:
\begin{equation}\label{eq2020}
\left\{ \begin{array}{l}
\displaystyle
{{}^c\mathbb{D}}_{0^{+}}^{0.9}\mathfrak{z} (\ell )+\Gamma(1.6)\, {{}^c\mathbb{D}}_{0^{+}}^{0.4 }\mathfrak{z} (\ell )=\ell\sin{\mathfrak  z(\ell )}, \\[3mm]
\displaystyle  \mathfrak{z}(0)= 0.5, \end{array}\right.
\end{equation}
for $\ell \in [0, 1]$, here
\[\mu=0.9,\quad \kappa=0.4, \quad \omega=\Gamma(1.6), \quad a=0, \quad b= 1, \quad  \Phi(\ell)=  \ell,\]
and
\[\mathbb{F}(\ell ,\mathfrak{z} (\ell )) = \ell\sin{\mathfrak  z(\ell )},\]
for all $\ell \in \lbrack 0, 1]$. 
\end{example}

Obviously, $\mathbb{F}$ is continuous. On the one hand,  it is not difficult to verify that the choices $\mathfrak{z} _{0}(\ell)=0.5$ and $\tilde{\mathfrak{z}}_{0}(\ell )=0.5+\ell$,
 are lower and upper solutions of~\eqref{eq2020}, respectively, with $\mathfrak{z}_{0}(\ell) \leq \tilde{\mathfrak{z}}_{0}(\ell )$.
Moreover, for all $\ell \in \lbrack 0, 1]$, and  $$\mathfrak{z}_{0}(\ell)\leq x(\ell )\leq y(\ell) \leq \tilde{\mathfrak{z}}_{0}(\ell ),$$
one has
$$0\leq\mathbb{F}(\ell , y(\ell))-\mathbb{F}(\ell , x(\ell ))\leq \mathbb (y(\ell)-x(\ell )).$$
Thus all the assumptions of Theorem~\ref{EXTREMAL} hold true. As a result, Theorem~\ref{EXTREMAL} guarantees that the problem~\eqref{eq2020} has  a unique  solution, which can be obtained by the following iterative scheme
\begin{align*}
\begin{aligned}	\mathfrak{z}_{n+1}(\ell)=& 0.5+\int_{0}^{\ell}\frac{\mathbb E_{0.5,0.9}\left(-\Gamma(1.6)\sqrt{\ell-\rho}\right)}{(\ell-\rho)^{0.1}}\rho\sin{\mathfrak  z_{n}(\rho )}\mathrm{d}\rho,
\end{aligned}
\end{align*}
starting from $\mathfrak{z} _{0}(\ell)=0.5$ or $\tilde{\mathfrak{z}}_{0}(\ell )=0.5+\ell$.\\
\\
Applying the same algorithm used in the previous example, we may state the same conclusion that  the two sequences $\mathfrak{z}_{n}$ and $\tilde{ \mathfrak{z}}_{n}$ converge uniformly and very rapidly to the exact solution as shown in Figure \ref{Example-Two-Plot} and supported by the error analysis in Table  \ref{Example-Two-Table-En}.
\begin{figure}
        \centering
          \includegraphics[height=3.8cm]{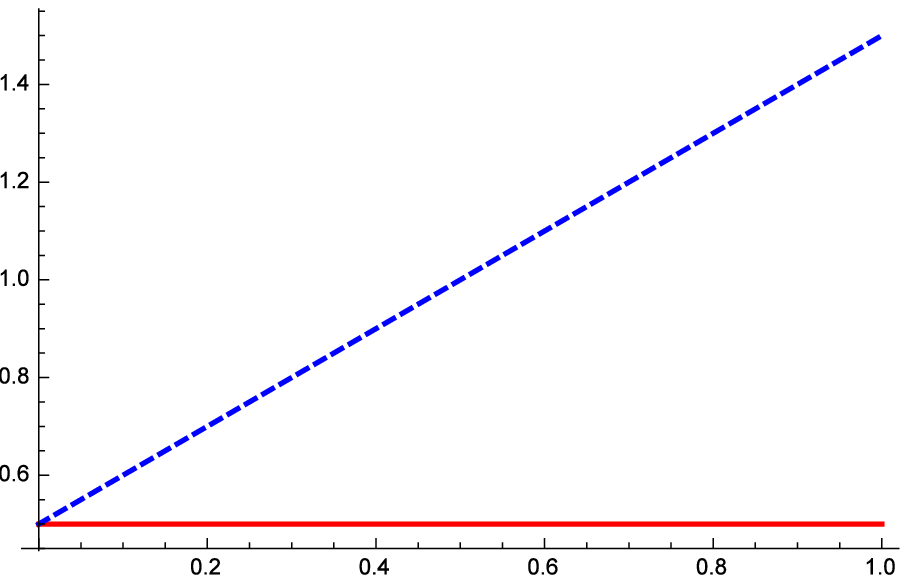}
          \includegraphics[height=3.8cm]{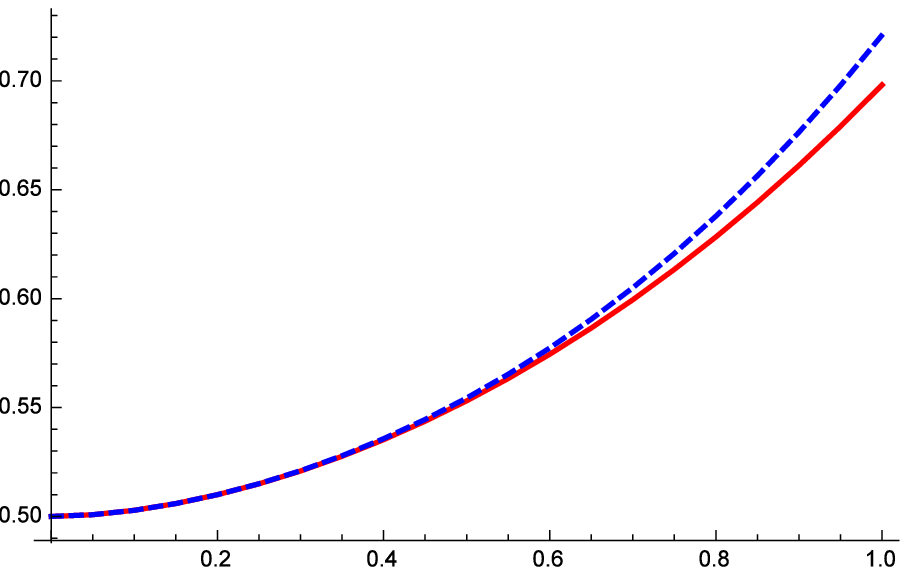}\\
          \includegraphics[height=3.8cm]{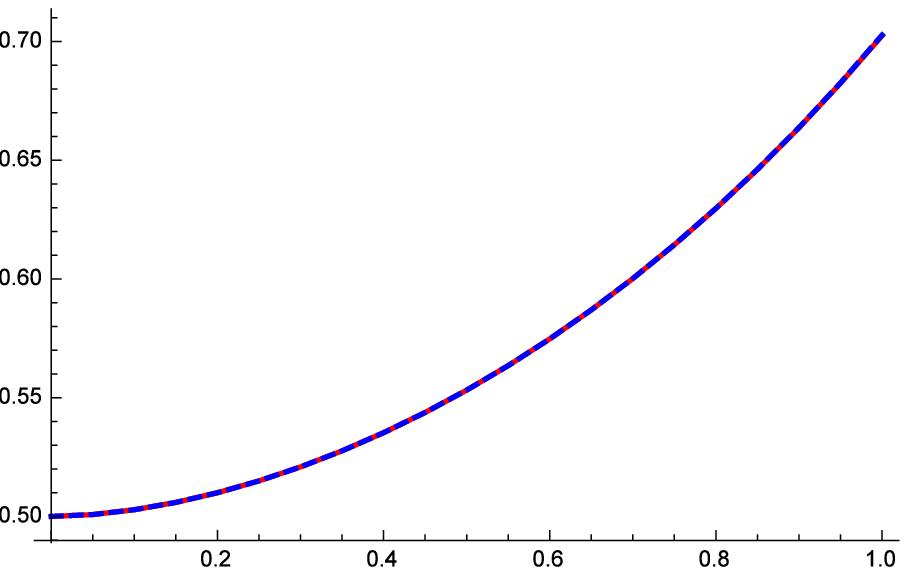}
         \caption{Graphs of $\mathfrak{z}_{n}$ and $\tilde{ \mathfrak{z}}_{n}$ ($n=0,2,4$) for Example 2: $\tilde{ \mathfrak{z}}_{n}$ (dashed); $\mathfrak{z}_{n}$ (solid).   \label{Example-Two-Plot}}
\end{figure}

\begin{table}[h]
\begin{center}
\begin{tabular}{|c|c|c|c|c|c|}
\hline
$n$     & $0$           & $1$                    & $2$ & $3$ & $4$\\
\hline
 $E_n$   & $0.33333$    & $4.22221\times 10^{-3}$& $5.94414\times10^{-5}$ & $5.98584\times10^{-7}$ & $4.38003\times10^{-9}$
\\ \hline
 \end{tabular}
\end{center}
\caption{Error bounds $E_n$ ($n=0,1,2,3,4$) for Example 2.
\label{Example-Two-Table-En}}
\end{table}

\end{document}